\newcommand{\rank}{{\mathrm{rank}}}
\newcommand{\Id}{\mathrm{Id}}
\newcommand{\inv}{\mathrm{inv}}
\newcommand{\ninv}{\mathfrak{inv}}
\newcommand{\e}{\mathfrak{e}}
\newcommand{\stab}{\mathrm{stab}}
\newcommand{\im}{\mathrm{im}}
\newcommand{\rimes}{\!\times\! }
\renewcommand{\P}{\mathrm{P}}
\newtheorem{theorem}{Theorem}[section]
\newtheorem{lemma}[theorem]{Lemma}
\newtheorem{conjecture}[theorem]{Conjecture}
\newtheorem{definition}[theorem]{Definition}
\newtheorem{proposition}[theorem]{Proposition}
\newtheorem{corollary}[theorem]{Corollary}
\newtheorem{example}[theorem]{Example}
\newtheorem{notation}[theorem]{Notation}
\newtheorem{remark}[theorem]{Remark}
\numberwithin{equation}{section}
\title[]{Groupoid morphisms as an algebraic structure for nonautonomous dynamics}
\author[]{N\'estor Jara}
\address{Universidad de Chile, Departamento de Matem\'aticas. Casilla 653, Santiago, Chile}
\email{nestor.jara@ug.uchile.cl}
\subjclass[2020]{18B40, 22A22, 37B55, 37C60.}
\keywords{Nonautonomus dynamics, Dynamical systems, Groupoids}
\thanks{This research has been partially supported by ANID, Beca de Doctorado Nacional 21220105.}
\date{\today}
\begin{document}

\maketitle

\begin{abstract}
We present groupoid morphisms as an algebraic structure for nonautonomous dynamics, as well as a generalization of group morphisms, which describe classic dynamical systems. We introduce the structure of cotranslations, as a specific kind of groupoid morphism, and establish a correspondence between cotranslations and skew-products. We give applications of cotranslations to nonautonomous equations, both in differences and differential. We obtain results about the differentiability of cotranslations, as well as dimension invariance and diagonalization (through a generalized notion of kinematic similarity) for a partial version of them, admitting noninvertible transformations. 
\end{abstract}

\section{Introduction}

Consider a category $\mathscr{C}$ (for instance, topological spaces, vector spaces, Banach spaces, among others) and an element $X$ on said category. We also consider
\begin{itemize}
    \item $\mathbb{B}_X$ the collection of all the morphisms of the category $\mathscr{C}$ of $X$ on itself,
    \item $\mathbb{A}_X$ the collection of all invertible elements of $\mathbb{B}_X$ (whose inverse is also in $\mathbb{B}_X$). We know that $\mathbb{A}_X$ is a group.
\end{itemize}

Depending on the category $\mathscr{C}$, both sets $\mathbb{B}_X$ and $\mathbb{A}_X$ may have more structure, but for now we keep it general.

\smallskip
A dynamical system, independently of the category $\mathscr{C}$, considers a group $G$ (with maybe some extra structure) and a group morphism $\gamma:G\to \mathbb{A}_X$. Particularly, in the topological case, where $X$ is a topological space and $G$ a topological group (for simplicity, let us consider both $X$ and $G$ to be locally compact and Hausdorff), we set on $\mathbb{A}_X$ the compact-open topology \cite[Definition I, p. 301]{Bourbaki}, and the group morphism $\gamma$ is required to be continuous.

\smallskip
Equivalently, we may define that a left (topological) dynamical system is a triple $(X,G,\alpha)$, where $X$ and $G$ satisfy the same conditions as before and $\alpha$ is a continuous {\it left action} of $G$ on $X$, {\it i.e.} a map $\alpha:G\rimes X\to X$ verifying that each $\alpha(g,\cdot)$ is a homeomorphism of $X$ on itself and
$$\alpha(g,\alpha(h,x))=\alpha(gh,x), \qquad \forall\,g,h\in G,\, x\in X.$$

It is easy to see that by setting $\hat{\alpha}:G\to \mathbb{A}_X$ by $\left[\hat{\alpha}(g)\right](x)=\alpha(g,x)$ we obtain a continuous group morphism.

\smallskip
On the other hand, we can define {\it right actions} as maps $\beta:X\times G\to X$ verifying that each $\beta(\cdot,g)$ is a homeomorphism of $X$ on itself and
$$\beta(\beta(x,g),h)=\beta(x,gh), \qquad \forall\,g,h\in G,\, x\in X.$$

Once more, by setting $\hat{\beta}:G\to \mathbb{A}_X$ by $\left[\hat{\beta}(g)\right](x)=\beta(x,g^{-1})$ we obtain a continuous group morphism (although, if $G$ is Abelian, we can just use $g$ instead of $g^{-1}$). 

\smallskip
Thus, both left and right actions describe dynamical systems and the use of one over the other is just a matter of convenience in notation. This correspondence between left or right actions and their algebraic counterpart on group morphisms is a well known fact at the basis of classic dynamics.

\smallskip
A very important, on its own right, example of dynamical system is given by the flow of solutions of an autonomous differential equation (satisfying standard existence and 
uniqueness conditions). To rephrase it, an autonomous differential equation defines a continuous action of $\mathbb{R}$ on (for instance) $\mathbb{R}^d$. We usually call such system an autonomous dynamic.


\smallskip
However, as soon as we consider a nonautonomous differential equations, dynamical systems (hence group morphisms and actions) no longer describe properly the dynamics given by the flow of solutions. In other words, nonautonomous dynamics \cite{Kloeden} do not have a widely discussed analogous notion of {\it action}, even less what could be its algebraic counterpart.

\smallskip
The main goal of this paper is to present an algebraic generalization of dynamical systems, which can be applied to describe the flows given by nonautonomous equations (both differential and in differences), as well as generalize the notion of nonautonomous dynamics to general groups, and not just $\mathbb{R}$ or $\mathbb{Z}$.

\smallskip
The paper is organized as follows. In the second section we study
the structure of {\it skew-product}, objects which, to the best of our knowledge, emerge for the first time on 1950 by H. Anzai \cite{Anzai}, whom uses them to describe a certain ergodic dynamic, and then later on 1965 were connected to differential equations thanks to the work of R. K. Miller \cite{Miller}. This concept refers to a generalization of dynamical systems given by left actions, but instead of considering one action, it uses a family of action-like functions with a certain compatibility relation (we give more details later). The skew-products we present here are defined for general groups, not just the usual $\mathbb{R}$ or $\mathbb{Z}$.

\smallskip
On the third section we present groupoids and groupoid morphisms. We define a specific type of these groupoid morphisms called cotranslations, which will give an algebraic structure of the dynamics that can be represented by skew-products. We also state some results regarding the relation of these groupoid morphisms to  discrete nonautonomous dynamics. The virtue of cotranslations over skew-products is that more analytic and algebraic properties are easily deduced from this structure.

\smallskip
On the fourth section we study differentiable groupoid morphisms, give some basic properties and give an application to the problem of existence of solutions to linear nonautonomous differential equations on Banach spaces.

\smallskip
On the final section we study a partial notion of the groupoid morphisms we presented earlier, mainly on the Euclidean space, and give some of their algebraic and topological properties.

\section{Skew-product dynamical systems}

On this section we present skew-products dynamical systems. Although we present them in the topological case, a similar structure can be defined and studied for objects on different categories. As described by R. J Sacker \cite{Sacker2} on a symposium on 1976, a  (topological) skew-product is constituted from a locally compact Hausdorff topological space $X$ and a locally compact Hausdorff topological group $G$, both will be fixed on this section unless stated otherwise. Thus, in this context, $\mathbb{A}_X$ denotes the topological group of all homeomorphisms of $X$ on itself, given the compact-open topology. The main characteristic of this generalization of dynamical systems is that instead of a unique group action, we consider a family of action-like maps. It is worth noting that in \cite{Elaydi,Sacker,Sacker2} the groups are $\mathbb{R}$ or $\mathbb{Z}$, while here we present the construction for any locally compact Hausdorff topological group.

\smallskip
For $A,B$ locally compact Hausdorff spaces, we set $\mathcal{C}\big(A;B\big)$ the space of continuous functions from $A$ to $B$, given the compact-open topology. Let us consider the evaluation maps:
\begin{itemize}
\item $\mathfrak{e}:\mathcal{C}\big(G\!\times\!X; X\big)\times G\times X\to X$, $(\psi,g,x)\mapsto \psi(g,x)$,
    \item $\mathfrak{e}_G:\mathcal{C}\big(G; X\big)\times G\to X$, $(\varphi,g)\mapsto \varphi(g)$,
    \item $\mathfrak{e}_X:\mathcal{C}\big(X; X\big)\times G\to X$, $(\varphi,x)\mapsto \varphi(x)$,
\end{itemize}
which are all continuous \cite[Corollary I, p. 303]{Bourbaki}, as well as the partial evaluation maps:
\begin{itemize}
    \item $\widetilde{\mathfrak{e}}_G:\mathcal{C}\big(G\!\times\!X; X\big)\times G\to \mathcal{C}\big(X; X\big)$, $(\psi,g)\mapsto \psi(g,\cdot)$,
    \item $\widetilde{\mathfrak{e}}_X:\mathcal{C}\big(G\!\times\!X; X\big)\times X\to \mathcal{C}\big(G; X\big)$, $(\psi,x)\mapsto \psi(\cdot,x),$
\end{itemize}
which are also continuous \cite[Corollary II, p. 303]{Bourbaki}. For a subspace $Y\subset \mathcal{C}\big(G\!\times\!X; X\big)$, we define 
$$Y_X:=\widetilde{e}_X(Y\rimes X)\subset \mathcal{C}\big(G; X\big),\quad \text{and}\quad Y_G:=\widetilde{e}_G(Y\rimes G)\subset \mathcal{C}\big(X; X\big),$$ 
both as topological subspaces. It is easy to see that
$$\e(\psi,g,x)=\e_G\left(\widetilde{\e}_X(\psi,x),g\right)=\e_X\left(\widetilde{\e}_G(\psi,g),x\right),\quad\forall\,\psi\in Y,g\in G,x\in X.$$

\begin{definition}
    We say that a space of functions $Y\subset \mathcal{C}\big(G\!\times\!X; X\big)$ is \textbf{admissible}, if:
    \begin{itemize}
        \item [i)] $Y_G\subset\mathbb{A}_X$
        \item [ii)] $\psi(e,x)=x$ for every $\psi\in Y$ and $x\in X$, where $e$ is the unit of $G$.
    \end{itemize}
    In \cite{Sacker}, such a space $Y$ is called a \textbf{Hull}.
\end{definition}

On the other hand, if we consider the right action $\theta$ of $G$ on itself by translations, {\it i.e.}
$$\theta:G\times G\to G,\quad (g,h)\mapsto gh,$$
it lifts to a left action $\Theta$ of $G$ on spaces of functions with domain $G$. In particular, we have $\Theta:G\!\times\!\mathcal{C}\big(G; X\big)\to \mathcal{C}\big(G; X\big)$ given by
$$\left[\Theta(h,\varphi)\right](g)=\varphi\left(\theta(g,h)\right)=\varphi(gh),\quad \forall\,\varphi\in \mathcal{C}\big(G; X\big),\,g,h\in G,$$
and an easy topology exercise shows that $\Theta$ is continuous. 
For an admissible collection $Y$, the set $Y_X$ is contained on $\mathcal{C}\big(G;X\big)$, thus we can consider its saturation $\widetilde{Y}_X:=\Theta\big(G\!\times\!Y_X\big)$, which is invariant under the action $\Theta$, hence $\left(\widetilde{Y}_X,G,\Theta\right)$ is a (left) dynamical system.

\smallskip
With this system, we may write the map $\widetilde{\Theta}:X\rimes Y\rimes G\rimes G\to X$ by 
\begin{equation}\label{192}
    \widetilde{\Theta}(x,\psi,g,h)=\left[\Theta\left(h,\widetilde{\e}_X(\psi,x)\right)\right](g)=\psi(gh,x).
\end{equation}

Now, consider a (topological) dynamical system $(Y,G,\sigma)$, given by a continuous left action $\sigma:G\times Y\to Y$. The {\it skew-flow} associated to it is the map $\pi:X\times Y\times G\to X\times Y$ given by $\pi=\mathfrak{e}\times\sigma$, that is
$$\pi(x,\psi,h)=\left(\mathfrak{e}(\psi,h,x),\sigma(h,\psi)\right)=\left(\psi(h,x),\sigma(h,\psi)\right),\quad\forall\,\psi\in Y,\,h\in G,\,x\in X,$$
which is the way in which skew-products are depicted on \cite{Sacker}. With this system, we may write the map $\widetilde{\pi}:X\rimes Y\rimes G\rimes G\to X$ by
\begin{equation}\label{193}
\widetilde{\pi}(x,\psi,g,h)=\left[\sigma\big(h,\psi\big)\right]\left(g,\psi\big(h,x\big)\right),\quad\forall\,\psi\in Y,\,g,h\in G,\,x\in X.
\end{equation}

Now, we have all the components of a skew-product, all that is left is to state the compatibility of the systems given by $\Theta$ and $\sigma$. This means that the maps $\widetilde{\Theta}$ and $\widetilde{\pi}$ on (\ref{192}) and (\ref{193}) respectively, must coincide, {\it i.e.}
$$\left[\sigma(h,\psi)\right]\left(g,\psi(h,x)\right)=\psi(gh,x) ,\quad\forall\,\psi\in Y,\,g,h\in G,\,x\in X.$$

In this last statement, it is clear that all the information regarding the skew-product is contained on the properties of $\sigma$. We formalize this discussion on the following definition:
\begin{definition}\label{300}
    A \textbf{skew-product dynamical system} is a quadruple $(X,G,Y,\sigma)$, where:
    \begin{itemize}
    \item [i)] $Y\subset \mathcal{C}\big(G\!\times\!X;X\big)$ is admissible,
        \item [ii)] $(Y,G,\sigma)$ is a dynamical system, where $\sigma:G\!\times\! Y\to Y$ is a continuous left action,
        \item [iii)] $\left[\sigma(h,\psi)\right]\left(g,\psi(h,x)\right)=\psi(gh,x)$ for every $\psi\in Y,\,g,h\in G$ and $\,x\in X$.
    \end{itemize}
\end{definition}

\begin{remark}
   {\rm  It is clear that if $(X,G,\alpha)$ is a topological dynamical system where $\alpha$ is a continuous left action, then $(X,G,\{\alpha\},\mathfrak{q})$ is a skew-product, where $\mathfrak{q}:G\rimes\{\alpha\}\to \{\alpha\}$ is the only possible action, {\it i.e.} the trivial action. In other words, a dynamical system is a skew-product where the hull contains only one function.}
\end{remark}

Let us illustrate this construction on an example.

\begin{example}
{\rm
Consider $\mathbb{R}^d$ as a topological space, hence $\mathbb{A}_X$ is the group of all of its homeomorphisms. Set $G=\mathbb{Z}$. Consider the following nonautonomous difference equation 
    \begin{equation}\label{121}
        x(n+1)=F\left(n,x(n)\right),
    \end{equation} 
    where for every $n\in \mathbb{Z}$ we have $F(n,\cdot)\in \mathbb{A}_X$. Let $n\mapsto x(n,m,\xi)$ be its unique solution such that $x(m,m,\xi)=\xi$. Define $Y=\left\{\psi_{m}\,:\,m\in \mathbb{Z}\right\}$, the collection of all solutions of (\ref{121}) parameterized by its {\it temporal} initial condition, {\it i.e.}
    $$\psi_{m}(n,\xi)=x\big(n+m,m,\xi\big).$$ 
    
    Set now $\sigma:Y\!\times\! \mathbb{Z}\to Y$ the action of {\it left translations on the temporal initial condition}, that is 
    $$\sigma(n,\psi_{m})=\psi_{n+m},\quad\forall\,n,m\in \mathbb{Z},$$ 
    or, evaluating
    \begin{align*}
    \left[\sigma\left(\psi_{m},n\right)\right](p,\xi)=\psi_{n+m}(p,\xi)=x\big(p+n+m,n+m,\xi\big),\quad\forall\,p,n,m\in \mathbb{Z},\,\xi\in \mathbb{R}^d,
    \end{align*}
    and its associated skew-flow $\pi:\mathbb{R}^d\!\times\!Y\!\times\!\mathbb{Z}\to \mathbb{R}^d\!\times\!Y$ given by $\pi\left(\xi,\psi_{m},n\right)=\left(\psi_{m}(n,\xi),\psi_{n+m}\right)$. Evaluating, we have
\begin{align*}
        \widetilde{\pi}(\xi,\psi_m,p,n)&=\psi_{n+m}\left(p,\psi_{m}\big(n,\xi\big)\right)\\
        &=x\left(p+n+m,n+m,x\big(n+m,m,\xi\big)\right),\quad\forall\,p,n,m\in \mathbb{Z},\,\xi\in \mathbb{R}^d.
    \end{align*}

On the other hand, note that
\begin{align*}
    \widetilde{\Theta}(\xi,\psi_m,p,n)=\psi_{m}(p+n,\xi)=x\big(p+n+m,m,\xi\big),\quad\forall\,p,n,m\in \mathbb{Z},\,\xi\in \mathbb{R}^d.
\end{align*}

Now, it is well know that, by 
uniqueness of solutions, we have
$$x\left(p+n+m,n+m,x\big(n+m,m,\xi\big)\right)=x\big(p+n+m,m,\xi\big), \quad\forall\,p,n,m\in \mathbb{Z},\,\xi\in \mathbb{R}^d,$$
thus $\widetilde{\Theta}$ and $\widetilde{\pi}$ coincide, which implies that $\left(\mathbb{R}^d,\mathbb{Z},\{\psi_m:m\in \mathbb{Z}\},\sigma\right)$ is indeed a skew-product dynamical system.   }    
\end{example}

On a presentation given on 2004, S. Elaydi and R. J. Sacker \cite{Elaydi} give further applications of skew-products on the theory of nonautonomous difference equations, as the search for asymptotically stable solutions for Beverton-Holt equations \cite{Cushing}. On the other hand, in \cite{Sacker} the authors use this structure to develop the exponential dichotomy spectrum for nonautonomous linear differential equations when the admissible space of functions is compact.

\section{Groupoid morphisms and cotranslations}

In this section we present the structure of cotranslations, which are a specific kind of groupoid morphisms and give an algebraic counterpart to skew-products. This gives a generalization of group morphisms, which are known to describe dynamical systems.

\begin{definition} \cite[Definition 1.2]{Williams}
    We say that a set $\Xi$, doted of a subset $\Xi^{(2)}\subset \Xi\rimes \Xi$ (called the collection of composible pairs) and two maps $\bullet:\Xi^{(2)}\to \Xi$ given by $(\eta,\xi)\mapsto\eta\bullet\xi$ (called composition law), and $\ninv:\Xi\to \Xi$, is a \textbf{groupoid} if the following conditions are verified
    \begin{itemize}
        \item [i)] (associativity) If $(\eta,\xi),\,(\xi, \zeta)\in \Xi^{(2)}$, then  $(\eta\bullet\xi,\zeta),\,(\eta,\xi\bullet\zeta)\in \Xi^{(2)}$ and $(\eta\bullet\xi)\bullet\zeta=\eta\bullet(\xi\bullet\zeta)$,
        \item [ii)] (involution) $\ninv(\ninv(\eta))=\eta$ for every $\eta\in \Xi$,
        \item [iii)] (identity) for every $\eta\in \Xi$, we have $\left(\eta,\ninv(\eta)\right)\in\Xi^{(2)}$ and $(\eta,\xi)\in\Xi^{(2)}$ implies that $\ninv(\eta)\bullet(\eta\bullet\xi)=\xi$ and $(\eta\bullet\xi)\bullet\ninv(\xi)=\eta$.
    \end{itemize}
    
    Furthermore, we define $\Xi^{(0)}:=\left\{\eta\in \Xi:\eta=\ninv(\eta)=\eta\bullet\eta\right\}$ and call it the \textbf{units space} of the groupoid.
\end{definition}

The most trivial example of a groupoid is a group $G$, where $\bullet$ is the composition law and $\ninv(g)=g^{-1}$. In this case $G^{(2)}=G^2$ and $G^{(0)}=\{e\}$. 

\begin{example}
  {\rm
   Consider a topological space $\mathfrak{T}$, the collection $\widetilde{\Xi}=\left\{\eta:[0,1]\to \mathfrak{T}:\,\eta\text{ is continuous}\right\}$ and the quotient  $\Xi=\widetilde{\Xi}/\sim$, where $\sim $ is the equivalence relation given by homotopies that fix start and end. Doting $\Xi$ of
   $$\Xi^{(2)}:=\left\{(\eta,\xi)\in \Xi^2:\eta(1)=\xi(0)\right\},$$
   and
   $$(\eta\bullet\xi)(t)=\left\{ \begin{array}{lcc}
             \eta(2t) &  \text{ if } &   0\leq t\leq 1/2 \\
             \\ \xi(2t-1)& \text{ if } & 1/2\leq t\leq1
             \end{array}
             \right.,\qquad \ninv(\eta)(t)=\eta(1-t),$$
   it is easily deduced that $\Xi$ is a groupoid. This is usually called the \textbf{paths groupoid} with the operation of \textbf{concatenation}. In this case, the units space corresponds to the collection of the homotopy classes of constant paths.
    }
\end{example}

\begin{example}
{\rm
If a group $G$ acts by the left on a set $M$, the product $M\rimes G$ has groupoid structure. Indeed, setting
$$\left(M\rimes G\right)^{(2)}:=\left\{\left((x,g),(y,h)\right)\in \left(G\rimes M\right)^{2}:x=h\cdot y\right\},$$
and
$$\bullet\left((x,g),(y,h)\right)=(y,gh),\quad\ninv(x,g)=( g\cdot x,g^{-1}),$$
the groupoid axioms are easily followed. In this case, the units space corresponds to the collection of points of the form $(x,e)$, where $e$ is the group unit. Analogously we can define a groupoid for right actions.

\smallskip
This example is particularly useful when $G$ acts on itself by left translations, in which case we call $G\rimes G$ the \textbf{left translations groupoid} for $G$. Note that the groupoid $G\rimes G$ is never commutative, even if $G$ is Abelian, since $\left((g,h),(k,l)\right)\in \left(G\rimes G\right)^{(2)}$ does not imply $\left((k,l),(g,h)\right)\in \left(G\rimes G\right)^{(2)}$.
}
\end{example}

\begin{definition}  \cite[Definition 1.8]{Williams}
    If $\Xi$ and $\Upsilon$ are groupoids with composition laws $\bullet$ and $\star$ respectively, a \textbf{groupoid morphism} is a map $\vartheta:\Xi\to \Upsilon$ verifying that $(\eta,\xi)\in \Xi^{(2)}$, implies $\left(\vartheta(\eta),\vartheta(\xi)\right)\in \Upsilon^{(2)}$ and $\vartheta(\eta\bullet\xi)=\vartheta(\eta)\star\vartheta(\xi)$.
\end{definition}

Two basic facts about groupoid morphisms are that they preserve involutions and units spaces. In other words, for a groupoid morphism $\vartheta:\Xi\to\Upsilon$, $\ninv(\vartheta(\xi))=\vartheta(\ninv(\xi)$ for all $\xi\in \Xi$ and $\vartheta(\eta)\in \Upsilon^{(0)}$ for all $\eta\in \Xi^{(0)}$.

\begin{example}\label{159}
{\rm    Consider an element $X$ on some category $\mathscr{C}$. Set a group $G$,  $\gamma:G\to \mathbb{A}_X$ a group morphism and give $G\rimes G$ the left translations groupoid structure. By setting $Z:G\rimes G\to \mathbb{A}_X$ by $Z(g,h)=\gamma(h)$ we obtain a groupoid morphism. Indeed:
$$Z(g,kh)=\gamma(kh)=\gamma(k)\gamma(h)=Z(hg,k)Z(g,h).$$
}
\end{example}

    This example shows that every dynamical system is in particular given by a groupoid morphism, since a dynamical system is always given by a group morphism $\gamma:G\to \mathbb{A}_X$. The next example shows however, that groupoid morphisms describe a more general kind of dynamics.

\begin{example}\label{191}
    {\rm Consider the left translations groupoid structure on $\mathbb{R}\rimes\mathbb{R}$. Take $X=\mathbb{R}^d$ as a topological space, thus $\mathbb{A}_X$ is the group of all of its homeomorphisms. Consider a nonautonomous real differential equation $\dot{x}=F(t,x)$ such that for every $\xi\in \mathbb{R}^d$ and every $r\in \mathbb{R}$, there is a unique and globally defined solution $x_{r,\xi}:\mathbb{R}\to\mathbb{R}^d$ such that  $x_{r,\xi}(r)=\xi$. Then the map $Z:\mathbb{R}\rimes\mathbb{R}\to \mathbb{A}_X$ given by $\left[Z(r,t)\right](\xi)=x_{r,\xi}(t+r)$ is a groupoid morphism. Indeed:
    $$\left[Z(r,t+s)\right](\xi)=x_{r,\xi}(t+s+r)=x_{s+r,x_{r,\xi}(s+r)}(t+s+r)=\left[Z(s+r,t)\circ Z(r,s)\right](\xi),$$
where the second equality is a well known fact deduced from the 
uniqueness of solutions. Moreover, giving $\mathbb{A}_X$ the compact-open topology and the groupoid $\mathbb{R}\rimes \mathbb{R}$ the product topology, $Z$ turns out to be continuous.
    
    }
\end{example}

Given the dynamical relevance shown by the previous examples, we give these morphisms a distinctive name.

\begin{definition}
    Consider a group $G$ and an object $X$ on a category. A \textbf{cotranslation} is a groupoid morphism $Z:G\rimes G\to \mathbb{A}_X$, where $G\rimes G$ has the left translations groupoid structure.
    \end{definition}

Now we present our main theorem. Although we state it for the topological case, it is easily generalized for objects and skew-products on different categories.

\begin{theorem}
    Let $X$ be a locally compact Hausdorff topological space and $G$ a locally compact Hausdorff topological group. Give all function spaces, including $\mathbb{A}_X$, the compact-open topology. There is a bijective correspondence between skew-product dynamical systems $(X,G,Y,\sigma)$, where the action $\sigma$ is transitive, and continuous cotranslations $Z:G\rimes G\to \mathbb{A}_X$.
\end{theorem}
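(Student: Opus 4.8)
The plan is to build explicit maps in both directions and check that they are mutually inverse, keeping in mind that for the left translations groupoid the defining identity of a cotranslation $Z\colon G\rimes G\to\mathbb{A}_X$ is
$$Z(r,ts)=Z(sr,t)\circ Z(r,s),\qquad\forall\,r,s,t\in G,$$
and that $Z(r,e)=\Id_X$ for every $r$, since a groupoid morphism sends the unit space $\{(x,e):x\in G\}$ of $G\rimes G$ into the unit space $\{\Id_X\}$ of $\mathbb{A}_X$.

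\smallskip
\textbf{From a cotranslation to a skew-product.} Given a continuous cotranslation $Z$, I would set $\psi_r(t,x):=\left[Z(r,t)\right](x)$; continuity of $Z$ composed with the evaluation $\e_X$ yields $\psi_r\in\mathcal{C}\big(G\rimes X;X\big)$, and I put $Y_Z:=\{\psi_r:r\in G\}$. Admissibility is immediate, because $\psi_r(t,\cdot)=Z(r,t)\in\mathbb{A}_X$ gives condition (i) and $\psi_r(e,\cdot)=Z(r,e)=\Id_X$ gives (ii). I would then define the candidate action by $\sigma_Z(h,\psi_r):=\psi_{hr}$. Rewriting the cotranslation identity as $Z(sr,t)=Z(r,ts)\circ Z(r,s)^{-1}$ shows that $Z(sr,\cdot)$ depends only on $Z(r,\cdot)$, so $\psi_r=\psi_{r'}$ forces $\psi_{sr}=\psi_{sr'}$ for all $s$ and $\sigma_Z$ is well defined on the \emph{set} $Y_Z$; the same identity in the form $\psi_r(ts,x)=\psi_{sr}\big(t,\psi_r(s,x)\big)$ is exactly compatibility (iii) of Definition \ref{300}. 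That $\sigma_Z$ is a left action reduces to $\psi_{h'(hr)}=\psi_{(h'h)r}$, and transitivity is automatic since $\psi_r=\sigma_Z(r,\psi_e)$.

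\smallskip
\textbf{From a skew-product to a cotranslation.} Here I would first observe that (iii) in fact \emph{determines} $\sigma$: evaluating at $y=\psi(h,x)$ gives $\left[\sigma(h,\psi)\right](g,\cdot)=\psi(gh,\cdot)\circ\psi(h,\cdot)^{-1}$, so $\sigma$ is recoverable from $Y$ alone. Using transitivity I fix a base point $\psi_0\in Y$ and define $Z(r,t):=\left[\sigma(r,\psi_0)\right](t,\cdot)$, which lands in $\mathbb{A}_X$ by (i) and satisfies $Z(r,e)=\Id_X$ by (ii). For the groupoid identity I would apply (iii) to $\psi:=\sigma(r,\psi_0)$ with the roles $h=s$, $g=t$, obtaining $\psi(ts,\cdot)=\left[\sigma(s,\psi)\right](t,\cdot)\circ\psi(s,\cdot)$; since $\sigma(s,\psi)=\sigma(sr,\psi_0)$, this reads precisely $Z(r,ts)=Z(sr,t)\circ Z(r,s)$.

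\smallskip
\textbf{Mutual inverse and the main obstacle.} For the composite starting from a skew-product, the forward map applied to $Z(r,t)=\left[\sigma(r,\psi_0)\right](t,\cdot)$ returns $\psi_r=\sigma(r,\psi_0)$, so $Y_Z=\{\sigma(r,\psi_0):r\in G\}$, which equals $Y$ \emph{exactly because $\sigma$ is transitive}; and $\sigma_Z=\sigma$ because (iii) forces the action. For the composite starting from a cotranslation, choosing the canonical base point $\psi_e$ recovers $Z(r,t)$ on the nose. I expect the main obstacle to be twofold. Conceptually, the backward map a priori depends on the base point $\psi_0$ (a different choice yields the cotranslation $(r,t)\mapsto Z(ra,t)$ for some $a\in G$), so the cleanest route is to package the labeling $r\mapsto\psi_r$ as part of the correspondence and to verify that transitivity is precisely what makes the orbit map $G\to Y$ onto, so that $Y_Z=Y$ independently of choices. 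Technically, the nontrivial checks are the continuity statements: that $\psi_r\in\mathcal{C}\big(G\rimes X;X\big)$, that $\sigma_Z$ is continuous for the compact-open topology on $Y_Z$, and that $Z$ is continuous on $G\rimes G$. These all rest on the exponential law for the compact-open topology and the continuity of the (partial) evaluation maps $\e_X,\widetilde{\e}_G$ recorded above \cite{Bourbaki}, together with local compactness of $X$ and $G$; assembling them so that both maps are continuous is where the real work lies.
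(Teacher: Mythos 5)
Your proposal is correct and follows essentially the same route as the paper: define $\psi_g(h,x)=\left[Z(g,h)\right](x)$ with $\sigma(h,\psi_g)=\psi_{hg}$ in one direction, use transitivity to parameterize $Y$ by $G$ (the paper via $G/\stab(\sigma)$, you via a base point $\psi_0$) and set $Z(g,h)=\psi_{\overline{g}}(h,\cdot)$ in the other, with axiom (iii) of Definition \ref{300} matching the groupoid identity. You are in fact somewhat more careful than the paper about the well-definedness of $\sigma(h,\psi_g)=\psi_{hg}$ and about the base-point ambiguity in the inverse construction, which are points the paper's proof passes over silently.
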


\begin{proof}
 Let $(X,G,Y,\sigma)$ be a skew-product dynamical system with $\sigma$ being transitive. The admissibility condition implies that $Y_G\subset \mathbb{A}_X$. On the other hand, as $\sigma$ is transitive, basic theory of group actions states that we can identify $Y$ with the quotient group,  $G/\stab(\sigma)$, where 
    $$\stab(\sigma)=\left\{g\in G: \sigma(g,y)=y,\,\forall\, y\in Y\right\}.$$

    Hence, we write $Y=\left\{\psi_{\overline{g}}:g\in G\right\}$, where $\overline{g}$ denotes the class of $g$ on the quotient $G/\stab(\sigma)$. Moreover, the left action $\sigma:G\rimes Y\to Y$ is rewritten as the left action $\hat{\sigma}:G\rimes Y\to Y$
given by $\hat{\sigma}(\psi_{h,\overline{g}})=\psi_{\overline{hg}}$, that is, it is identified with the action of left translations of $G$ on the quotient $G/\stab(\sigma)$.

    \smallskip
Now, we can define the continuous map $Z:G\rimes G\to \mathbb{A}_X$ given by$$Z(g,h)=\widetilde{\e}_G(\psi_{\overline{g}},h)=\psi_{\overline{g}}(h,\cdot),$$
 and we have
    \begin{eqnarray*}
        Z(g,kh)=Z(hg,k)\circ Z(g,h)&\Leftrightarrow&\left[Z(g,kh)\right](x)=\left[Z(hg,k)\circ Z(g,h)\right](x),\quad\forall\,x\in X\\
        &\Leftrightarrow&\left[\widetilde{\e}_G(\psi_{\overline{g}},kh)\right](x)=\widetilde{\e}_G(\psi_{\overline{hg}},k)\left[\left[ \widetilde{\e}_G(\psi_{\overline{g}},h)\right](x)\right],\quad\forall\,x\in X\\
        &\Leftrightarrow&\psi_{\overline{g}}(kh,x)=\psi_{\overline{hg}}\left(k,\psi_{\overline{g}}(h,x)\right),\quad\forall\,x\in X\\
        &\Leftrightarrow&\psi_{\overline{g}}(kh,x)=\left[\hat{\sigma}(h,\psi_{\overline{g}})\right]\left(k,\psi_{\overline{g}}(h,x)\right),\quad\forall\,x\in X,
    \end{eqnarray*}
and as the last condition is guaranteed by third the axiom of skew-products (Definition \ref{300}), then $Z$ is indeed a cotranslation.

\smallskip
Conversely, given a continuous cotranslation $Z:G\rimes G\to \mathbb{A}_X$, for each $g\in G$ we define $\psi_g:G\rimes X\to X$ given by  $\psi_g(h,x)=\left[Z(g,h)\right](x)$. Then, defining $Y:=\left\{\psi_g:g\in G\right\}$, it is admissible, since groupoid morphisms preserve units, and the only unit in $\mathbb{A}_X$ is the identity. On the other hand, defining the action $\sigma:G\times Y\to Y$ given by $\sigma(h,\psi_g)=\psi_{hg}$, or equivalently
    $$\left[\sigma(h,\psi_g)\right](k,x)=\left[Z(hg,k)\right](x),$$
    we obtain, by the same previous argument, that $(X,G,Y,\sigma)$ is a skew-product dynamical system, where the action $\sigma$ clearly results transitive.
\end{proof}

\begin{remark}\label{200}
{\rm
    The previous theorem illustrates that all the generalizations of dynamical systems we can obtain from skew-products, are also covered by cotranslations. However, the virtue of the latter is that they more clearly represent the algebraic structure behind these dynamics, just as group morphisms represent the algebraic structure of group actions.}
\end{remark}

Now we will give further properties and applications for cotranslations. In the following, we fix a group $G$ and an element $X$ on some category $\mathscr{C}$.

\begin{notation}
    { \rm For a cotranslation $Z:G\rimes G\to \mathbb{A}_X$ we set  $Z^\inv:G\rimes G\to \mathbb{A}_X$, the map given by $Z^{\inv}(g,h)=\left[Z(g,h)\right]^{-1}$. We do not use $Z^{-1}$ in order to avoid confusion with a possible inverse function.
    }
\end{notation}

\begin{lemma}
    Set a cotranslation $Z:G\rimes G\to \mathbb{A}_X$. If $e\in G$ is the group unit, then for every $g,h\in G$ one has $$Z(g,e)=\Id\quad\text{and}\quad Z^\inv(g,h)=Z(hg,h^{-1}).$$
\end{lemma}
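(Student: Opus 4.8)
The plan is to derive both identities directly from the defining morphism relation of a cotranslation, namely
$$Z(g,kh)=Z(hg,k)\circ Z(g,h),\qquad \forall\,g,h,k\in G,$$
which is exactly the groupoid-morphism condition once one reads off the composition law of the left translations groupoid on $G\rimes G$. Throughout I would use the crucial fact that each $Z(g,h)$ lies in $\mathbb{A}_X$ and is therefore invertible, so it can be cancelled on either side of a composition. There is also a shorter, more conceptual route through the two general facts recorded earlier for groupoid morphisms (preservation of the units space and of involutions); I would present the direct computation as the main argument and then note the abstract interpretation.

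For the first identity I would specialize the morphism relation at $h=e$. Since $k\cdot e=k$ and $eg=g$, this yields $Z(g,k)=Z(g,k)\circ Z(g,e)$ for all $g,k\in G$. Cancelling the invertible map $Z(g,k)$ on the left gives $Z(g,e)=\Id$. Conceptually, this is the statement that $Z$ sends the units space of $G\rimes G$, which consists exactly of the pairs $(g,e)$, into the units space of $\mathbb{A}_X$, namely the single element $\Id$.

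For the second identity I would specialize the same relation at $k=h^{-1}$, so that $kh=e$. Invoking the first identity $Z(g,e)=\Id$, the relation becomes
$$\Id=Z(g,e)=Z(hg,h^{-1})\circ Z(g,h).$$
Hence $Z(hg,h^{-1})$ is a left inverse of $Z(g,h)$; since $Z(g,h)$ is invertible, composing on the right with $[Z(g,h)]^{-1}$ shows it coincides with $[Z(g,h)]^{-1}=Z^{\inv}(g,h)$, which is the claim. Conceptually this is preservation of involutions, since $\ninv(g,h)=(hg,h^{-1})$ in the left translations groupoid.

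The computations are essentially immediate, so I do not expect a genuine obstacle; the only point demanding care is bookkeeping — correctly matching the defining relation $Z(g,kh)=Z(hg,k)\circ Z(g,h)$ with the intended specializations (keeping track of which argument is the base point $g$ and which is the group element), and making sure the cancellations are carried out on the correct side, given that composition in $\mathbb{A}_X$ is noncommutative.
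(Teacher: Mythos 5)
Your proof is correct. The paper's own proof is exactly your ``conceptual'' remark --- it simply invokes the general facts that groupoid morphisms preserve the units space and the involution, together with the identification of the units $(g,e)$ and the involution $\ninv(g,h)=(hg,h^{-1})$ in the left translations groupoid --- and your direct computation (specializing the relation $Z(g,kh)=Z(hg,k)\circ Z(g,h)$ at $h=e$ and at $k=h^{-1}$, then cancelling the invertible factor) is just that abstract argument written out explicitly, so both routes are valid and essentially the same.
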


\begin{proof}
    The first equality follows again from the fact that groupoid morphisms preserve units, and the only unit in $\mathbb{A}_X$ is the identity. The second equality follows from the fact that groupoid morphisms preserve involutions and we know the involutions in $G\rimes G$.
\end{proof}


\begin{proposition}
Let $Z:G\rimes G\to \mathbb{A}_X$ be a cotranslation. Let $\gamma:G\to \mathbb{A}_X$ be a group morphism such that $$\gamma(k)Z(g,h)=Z(g,h)\gamma(k),\qquad\forall\,g,k,h\in G,$$
then $W:G\rimes G\to \mathbb{A}_X$ given by $W(g,h)=Z(g,h)\gamma(h)$ is a cotranslation.
\end{proposition}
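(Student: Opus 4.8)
The plan is to reduce everything to the single functional identity that characterizes cotranslations. Since $\mathbb{A}_X$ is a group, it is a groupoid in which \emph{every} pair is composable, so a map $W:G\rimes G\to\mathbb{A}_X$ is a cotranslation precisely when its values lie in $\mathbb{A}_X$ and it satisfies the groupoid morphism identity, which for the left translations groupoid reads $W(g,kh)=W(hg,k)\circ W(g,h)$ for all $g,h,k\in G$ (this is exactly the relation verified in Example \ref{159}). Thus there are only two things to check: that $W$ takes values in $\mathbb{A}_X$, and that it obeys this identity.

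The first point is immediate: for each $g,h\in G$ both $Z(g,h)$ and $\gamma(h)$ belong to the group $\mathbb{A}_X$, hence so does their composite $W(g,h)=Z(g,h)\gamma(h)$. For the second point I would expand both sides and match them. Using the cotranslation identity for $Z$ together with the morphism property $\gamma(kh)=\gamma(k)\gamma(h)$ gives
\begin{align*}
W(g,kh)&=Z(g,kh)\,\gamma(kh)=Z(hg,k)\,Z(g,h)\,\gamma(k)\,\gamma(h),
\end{align*}
while the right-hand side is
\begin{align*}
W(hg,k)\circ W(g,h)&=Z(hg,k)\,\gamma(k)\,Z(g,h)\,\gamma(h).
\end{align*}
Cancelling the invertible factor $Z(hg,k)$ on the left and $\gamma(h)$ on the right (legitimate since both lie in the group $\mathbb{A}_X$), the required equality collapses to $Z(g,h)\gamma(k)=\gamma(k)Z(g,h)$, which is precisely the commutation hypothesis. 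Hence $W$ is a cotranslation.

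There is no genuine obstacle here beyond inserting the commutation relation at the correct place in the product: the only nontrivial manipulation is transporting $\gamma(k)$ past $Z(g,h)$, and the hypothesis is tailored exactly to license that single step. The care required is purely bookkeeping — keeping the composition order consistent with the juxtaposition convention of Example \ref{159} and ensuring that the cancelled factors are indeed invertible, both of which are guaranteed because the codomain is the group $\mathbb{A}_X$.
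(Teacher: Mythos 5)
Your proof is correct and follows essentially the same route as the paper: both expand $W(g,kh)=Z(hg,k)Z(g,h)\gamma(k)\gamma(h)$ and use the commutation hypothesis to move $\gamma(k)$ past $Z(g,h)$, obtaining $W(hg,k)W(g,h)$. The only cosmetic difference is that you phrase the final step as cancelling invertible factors rather than rewriting the product chain directly, and you additionally note that $W$ lands in $\mathbb{A}_X$, which the paper leaves implicit.
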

\begin{proof}
    It is enough to see that
    \begin{align*}
        W(g,kh)=Z(g,kh)\gamma(kh)&=Z(hg,k)Z(g,h)\gamma(k)\gamma(h)\\
    &=Z(hg,k)\gamma(k)Z(g,h)\gamma(h)\\
    &=W(hg,k)W(g,h).
    \end{align*}
\end{proof}

\begin{example}
    {\rm 
    Consider $X=\mathbb{R}^d$ as a Banach space, thus $\mathbb{B}_X$ is the Banach algebra of its linear continuous operators and $\mathbb{A}_X$ is the topological group of all of its homeomorphic isomorphisms, {\it i.e. }$GL_d(\mathbb{R})$. Consider a real linear nonautonomous differential equation $\dot{x}=A(t)x(t)$, where $t\mapsto A(t)$ is locally integrable. Take $\Phi:\mathbb{R}\rimes\mathbb{R}\to \mathbb{A}_X$ the transition matrix associated to this equation. We can see, as in Example \ref{191}, that $Z:\mathbb{R}\rimes\mathbb{R}\to \mathbb{A}_X$ given by $Z(r,t)=\Phi(r,t+r)$ defines a cotranslation.

    \smallskip
    Choose $\lambda\in \mathbb{R}$ and define $\gamma:\mathbb{R}\to \mathbb{A}_X$ by $\gamma(t)=e^{-\lambda t} \cdot\Id$. It is easily a group morphism which verifies the conditions of the previous proposition. Hence, $W:\mathbb{R}\times\mathbb{R}\to \mathbb{A}_X$ given by $W(r,t)=Z(r,t)\gamma(t)$ is a cotranslation. Moreover, in this case $W$ is once more the cotranslation associated to a linear differential equation, since it is obtained in the same fashion as $Z$, but regarding the shifted linear nonautonomous differential equation $\dot{x}=\left[A(t)-\lambda\cdot\Id\right]x(t)$.
    }
\end{example}

Before studying more properties for cotranslations, we will introduce a specific type of them in order to highlight the difference between this construction and classic dynamical systems. This will also give emphasis on the difference between autonomous and nonautonomous equations. This motivates the following definition:

\begin{definition}
   Given a cotranslation $Z:G\rimes G\to \mathbb{A}_X$, we say it is \textbf{autonomous} if $Z(g,h)=Z(k,h)$ for every $g,h,k\in G$.
\end{definition}

\begin{proposition}
    A cotranslation $Z:G\rimes G\to\mathbb{A}_X$ is autonomous if and only if there is a group morphism $\gamma:G\to \mathbb{A}_X$ such that $Z(g,h)=\gamma(h)$.
\end{proposition}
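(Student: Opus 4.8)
The plan is to prove both directions of the biconditional, noting that the backward direction is essentially immediate while the forward direction requires unpacking the definition of autonomous together with the cotranslation (groupoid morphism) identity.

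\smallskip
For the backward direction, suppose there is a group morphism $\gamma:G\to \mathbb{A}_X$ with $Z(g,h)=\gamma(h)$ for all $g,h\in G$. Then $Z(g,h)$ does not depend on the first coordinate at all, so trivially $Z(g,h)=Z(k,h)$ for every $g,h,k\in G$, which is precisely the definition of autonomous. (One might also remark that such a $Z$ is indeed a cotranslation, since $Z(g,kh)=\gamma(kh)=\gamma(k)\gamma(h)=Z(hg,k)Z(g,h)$, which recovers Example \ref{159}; but since the proposition takes $Z$ to be a cotranslation as a hypothesis, this check is only for consistency.)

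\smallskip
For the forward direction, assume $Z$ is autonomous, so $Z(g,h)$ is independent of $g$. Define $\gamma:G\to \mathbb{A}_X$ by $\gamma(h):=Z(e,h)$, where $e$ is the group unit; by autonomy $\gamma(h)=Z(g,h)$ for every $g\in G$, so it suffices to show $\gamma$ is a group morphism. The key step is to exploit the cotranslation identity $Z(g,kh)=Z(hg,k)\circ Z(g,h)$. Setting $g=e$ and using autonomy to discard the first arguments, the left side becomes $\gamma(kh)$, the factor $Z(h,k)$ becomes $\gamma(k)$, and $Z(e,h)$ is $\gamma(h)$, yielding $\gamma(kh)=\gamma(k)\gamma(h)$ for all $k,h\in G$. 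Together with the fact (from the preceding lemma) that $Z(g,e)=\Id$, which gives $\gamma(e)=\Id$, this establishes that $\gamma$ is a group morphism, completing the proof.

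\smallskip
I expect no serious obstacle here: the only subtlety is being careful that the substitution $g=e$ turns $Z(hg,k)$ into $Z(h,k)$ and then invoking autonomy to rewrite this as $\gamma(k)=Z(e,k)$, so that the first-coordinate dependence genuinely drops out. The content of the statement is really that autonomy collapses the groupoid morphism $G\rimes G\to\mathbb{A}_X$ into an ordinary group morphism $G\to\mathbb{A}_X$, recovering the classical dynamical-system picture of Example \ref{159}.
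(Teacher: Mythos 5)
Your proof is correct and takes essentially the same route as the paper: the backward direction is immediate (and the cotranslation check is Example \ref{159}), while the forward direction defines $\gamma(h)=Z(e,h)$ and applies the cotranslation identity at $g=e$, using autonomy to rewrite $Z(h,k)$ as $Z(e,k)=\gamma(k)$. If anything, you are slightly more explicit than the paper about where autonomy enters (namely in the step $Z(h,k)=Z(e,k)$), which the paper's computation leaves implicit.
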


\begin{proof}
We saw on Example \ref{159} that if $\gamma:G\to \mathbb{A}_X$ is a group morphism and we define $Z:G\rimes G\to \mathbb{A}_X$ by $Z(g,h)=\gamma(h)$ we obtain a cotranslation, which is autonomous by construction.
    
    \smallskip
Conversely, if $Z$ is a cotranslation, define $\gamma:G\to \mathbb{A}_X$ by $\gamma(g)=Z(e,g)$. It is easy to see that $\gamma$ is a group morphism, since 
$$\gamma(kh)=Z(e,kh)=Z(h,k)Z(e,h)=Z(e,k)Z(e,h)=\gamma(k)\gamma(h).$$
\end{proof}

Note once more, as in Remark \ref{200}, how this proposition illustrates that cotranslations generalize the notion of dynamical systems on any given category (as for instance, those given by autonomous differential equations, hence the name), since those are always given by a group morphism $\gamma:G\to\mathbb{A}_X$,  but clearly there are much more cotranslations which are not autonomous.

\smallskip
The following proposition shows the direct relation of cotranslations with group $\mathbb{Z}$ and nonautonomous difference equations.

\begin{proposition}
Set $X$ a Banach space, thus $\mathbb{B}_X$ is the Banach algebra of linear continuous operators and $\mathbb{A}_X$ is the topological group of its homeomorphic isomorphisms. There is a bijective correspondence between cotranslations $Z:\mathbb{Z}\times\mathbb{Z}\to \mathbb{A}_X$ and nonautonomous linear difference equations $x(n+1)=A(n)x(n)$, with $\mathbb{Z}\ni n\mapsto A(n)\in \mathbb{A}_X$, where the correspondence is given by
    \begin{equation*}
    Z(n,m)= \left\{ \begin{array}{lcc}
             A(n+m-1)A(n+m-2)\cdots A(n) &  \text{ if } &   m>0 \\
             \\ \Id & \text{ if }& m=0 \\
             \\ A^{-1}(n+m)A^{-1}(n+m+1)\cdots A^{-1}(n-1) & \text{ if } & m<0.
             \end{array}
   \right.
\end{equation*}
\end{proposition}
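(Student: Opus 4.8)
The plan is to exhibit the correspondence explicitly in both directions and to reduce the verification of the cotranslation identity to a single first-order recursion. Throughout I would write the cotranslation identity additively: for $Z:\mathbb{Z}\times\mathbb{Z}\to\mathbb{A}_X$ it reads $Z(n,k+m)=Z(m+n,k)\circ Z(n,m)$ for all $n,m,k\in\mathbb{Z}$. The first thing to record is that the displayed formula is equivalent to the pair of conditions $Z(n,0)=\Id$ together with the one-step recursion
\[
Z(n,m+1)=A(n+m)\,Z(n,m),\qquad n,m\in\mathbb{Z}.
\]
This is a short case check from the formula: for $m\geq 0$ one peels off the leading factor $A(n+m)$; for $m=0$ it reads $Z(n,1)=A(n)\Id$; and for $m<0$ one uses that $A(n+m)A^{-1}(n+m)=\Id$ cancels the leading inverse factor of $Z(n,m)$. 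The invertibility of each $A(n)$, i.e. $A(n)\in\mathbb{A}_X$, is exactly what makes the $m<0$ branch of the formula meaningful and the recursion two-sided.

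For the forward direction (equation $\mapsto$ cotranslation) I would define $Z$ by the formula and verify the cotranslation identity without any sign bookkeeping, by fixing $n,m$ and viewing both sides as functions of $k$. Setting $L(k):=Z(n,k+m)$ and $R(k):=Z(m+n,k)\circ Z(n,m)$, the recursion above gives $L(k+1)=A(n+m+k)L(k)$ and $R(k+1)=A(m+n+k)R(k)$, so both obey the same linear recursion in $k$; moreover $L(0)=Z(n,m)=Z(m+n,0)\circ Z(n,m)=R(0)$ since $Z(m+n,0)=\Id$. Upward induction on $k$ gives $L=R$ for $k\geq 0$, and since the coefficients $A(m+n+k)$ are invertible the same induction runs downward, giving $L(k)=R(k)$ for all $k\in\mathbb{Z}$. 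This is precisely $Z(n,k+m)=Z(m+n,k)\circ Z(n,m)$, so $Z$ is a cotranslation; continuity is automatic since $\mathbb{Z}$ is discrete.

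For the backward direction and bijectivity, given a cotranslation $Z$ I would set $A(n):=Z(n,1)\in\mathbb{A}_X$. Taking $k=1$ in the cotranslation identity yields exactly $Z(n,m+1)=Z(n+m,1)\circ Z(n,m)=A(n+m)Z(n,m)$, and the earlier lemma gives $Z(n,0)=\Id$; thus $Z(n,\cdot)$ solves the same first-order recursion with the same initial value as the formula built from this $A$. A first-order linear recursion with invertible coefficients has a unique two-sided solution, so the formula applied to $A(n)=Z(n,1)$ returns $Z$. Conversely, starting from a sequence $A$, building $Z$ by the formula, and reading off $Z(p,1)=A(p)$ (the $m=1$ case of the formula) recovers $A$. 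Hence the two assignments are mutually inverse, which establishes the claimed bijection.

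The step I expect to be the genuine obstacle is showing that the explicit formula defines a cotranslation, since a direct check of $Z(n,k+m)=Z(m+n,k)\circ Z(n,m)$ branches into many sign cases for $m$, $k$, and $m+k$. The recursion-plus-induction device above is what dissolves this into a single uniform argument; the one feature not to lose sight of is that invertibility of the $A(n)$ is indispensable, both to make the $m<0$ part of the formula well defined and to propagate the induction to negative $k$.
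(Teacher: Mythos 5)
Your proposal is correct and sets up exactly the same correspondence as the paper (the displayed formula in one direction, $A(n)=Z(n,1)$ in the other); the paper's own proof simply asserts the forward verification as clear, whereas you supply it via the reduction to the one-step recursion $Z(n,m+1)=A(n+m)Z(n,m)$ and a two-sided induction in $k$. This is a legitimate and complete filling-in of the details rather than a different route, so no further comparison is needed.
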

\begin{proof}
By defining $Z$ as in the statement of the proposition, starting from the function $n\mapsto A(n)\in \mathbb{A}_X$, clearly we obtain a groupoid morphism. Conversely, if we start with a cotranslation it is enough to define $A(n)=Z(n,1)$ and we obtain the desired equation.
\end{proof}

In the next section we will show a continuous analogous to the previous proposition. We finish this section by giving a generalization of the preceding proposition for finitely generated groups (for which we do not give a proof, since it follows the same steps as before), but first we need an auxiliary definition.

\begin{definition}
    Let $G$ be a discrete group with $n$ generators $\{\xi_1,\dots,\xi_n\}$ verifying the set of relations $R$. For each word $p\in R$ there is a positive integer $|p|$ called \textbf{length of the word} and a map $j_p:\{1,\dots,|p|\}\to \{1,\dots,n\}$ such that 
    $$p=\prod_{k=1}^{|p|}\xi_{j_p(k)}=\xi_{j_p(|p|)}\cdot\xi_{j_p(|p|-1)}\,\cdots\,\xi_{j_p(2)}\cdot\xi_{j_p(1)}.$$

    Let $X$ be a Banach space. We say that a collection of maps $A_i:G\to\mathbb{A}_X$, $i=1,\dots,n$, \textbf{preserves relations} if for each $p\in R$ and $\eta\in G$ one has
   \begin{align*}
       \Id_X&=\prod_{k=1}^{|p|}A_{j_p(k)}\left(\left[\prod_{\ell=1}^{k-1}\xi_{j_p(\ell)}\right]\eta\right)\\
       &:=A_{j_p(|p|)}(\xi_{j_p(|p|-1)}\cdots\xi_{j_p(1)}\eta)\circ \cdots \circ A_{j_p(2)}(\xi_{j_p(1)}\eta)\,\circ\, A_{j_p(1)}(\eta).
   \end{align*} 
\end{definition}

\begin{proposition}
    Let $G$ be a discrete group with $n$ generators $\{\xi_1,\dots,\xi_n\}$ and $X$ an object on some category $\mathscr{C}$. A multivariable nonautonomous difference equation of $G$ on $X$ is an equation of the form
    $$x(\xi_i\eta)=A_i(\eta)x(\eta),$$
    where the collection of maps $A_i:G\to \mathbb{A}_X$, $i=1,\dots,n$ preserves relations. A solution to this equation is a map $x:G\to X$. There is a bijective correspondence between multivariable nonautonomous difference equations of $G$ on $X$ and cotranslations $Z:G\rimes G\to \mathbb{A}_X$, which is given by $$A_i(\eta)=Z\left(\eta,\xi_i\right).$$
\end{proposition}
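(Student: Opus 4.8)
The plan is to exhibit the two maps of the claimed correspondence and to show they are mutually inverse, the backbone being an iterated form of the cotranslation identity $Z(g,kh)=Z(hg,k)\circ Z(g,h)$. First I would set up a word cocycle. For a word $w=\xi_{i_m}\cdots\xi_{i_1}$ in the generators and $g\in G$, write $\underline{w}\in G$ for the element it represents and put
\[
Z_w(g):=\prod_{k=1}^{m}A_{i_k}\!\left(\Big[\prod_{\ell=1}^{k-1}\xi_{i_\ell}\Big]g\right)=A_{i_m}(\xi_{i_{m-1}}\cdots\xi_{i_1}g)\circ\cdots\circ A_{i_1}(g).
\]
A direct check of the indices gives the concatenation identity $Z_{w_2w_1}(g)=Z_{w_2}(\underline{w_1}\,g)\circ Z_{w_1}(g)$ for any two words $w_1,w_2$. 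This single identity drives both directions.

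\emph{Forward direction.} Given a cotranslation $Z$, set $A_i(\eta):=Z(\eta,\xi_i)$. An induction on word length, applying the cotranslation identity at each step (with $k=\xi_i$, $h=\underline{w'}$ to prepend a generator), shows $Z(g,\underline{w})=Z_w(g)$ for every word $w$; this is the ``same steps as before'' part. Now if $p\in R$ is a relator then $\underline{p}=e$, so $Z(\eta,\underline{p})=Z(\eta,e)=\Id$ by the earlier lemma; on the other hand $Z(\eta,\underline{p})=Z_p(\eta)$ is exactly the product appearing in the definition of ``preserves relations''. Hence the family $(A_i)_i$ preserves relations.

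\emph{Reverse direction.} Given a relation-preserving family, define $Z(g,h):=Z_w(g)$ for any word $w$ with $\underline{w}=h$. The main obstacle, and the genuinely new point beyond the $\mathbb{Z}$ case (where there are no relations), is well-definedness: $Z_w(g)$ must not depend on the word $w$ chosen to represent $h$. I would handle this by taking the generating set symmetric, so that every $h$ admits a positive-word representative, and invoking the standard fact that two words represent the same element of $G$ iff they are linked by finitely many insertions and deletions of relators. It then suffices to check that replacing a subword $u\,p\,v$ by $u\,v$ (with $p\in R$) leaves the value unchanged: by the concatenation identity $Z_{upv}(g)=Z_u(\underline{pv}\,g)\circ Z_p(\underline{v}\,g)\circ Z_v(g)$, and since $\underline{p}=e$ we get $\underline{pv}=\underline{v}$ together with $Z_p(\underline{v}\,g)=\Id$ by hypothesis, so the right-hand side collapses to $Z_{uv}(g)$.

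Once $Z$ is well defined, the cotranslation identity is immediate: choosing words $u,v$ for $k,h$, the word $uv$ represents $kh$, so the concatenation identity gives $Z(g,kh)=Z_u(hg)\circ Z_v(g)=Z(hg,k)\circ Z(g,h)$, and taking $w=\xi_i$ yields $Z(\eta,\xi_i)=A_i(\eta)$. Thus the reverse map recovers the original family $(A_i)$, while the forward identity $Z(g,\underline{w})=Z_w(g)$ shows that starting from a cotranslation and reconstructing returns the same $Z$; the two assignments are therefore mutually inverse, which is the asserted bijection. The only step demanding real care is the well-definedness above---equivalently, the observation that ``preserves relations'' is precisely the condition for the word cocycle $Z_w$ to descend from words to $G$---and it is exactly here that the hypotheses on the generating set and the defining relations must be invoked.
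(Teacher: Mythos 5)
Your proof is correct and follows exactly the route the paper intends but omits (the paper explicitly declines to prove this proposition, deferring to the ``same steps'' as the $\mathbb{Z}$ case): define the word cocycle $Z_w$, verify the concatenation identity, and observe that ``preserves relations'' is precisely the condition for $Z_w$ to descend from words to $G$. The only point of care is the one you already flag: the reverse direction needs every element of $G$ to be a positive word in the $\xi_i$ and the insertion/deletion characterization of word equality, which tacitly requires the generating set to be symmetric and $R$ to contain the inverse-pair relators $\xi_i\xi_i^{-1}$ --- an assumption the paper's statement leaves implicit.
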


\section{Differentiable cotranslations on Banach spaces}

In this section we study the differentiability of cotranslations and analyze their relation to differential equations. During this section we fix a Banach space $X$ over the field $\mathbb{K}$, which might be $\mathbb{R}$ or $\mathbb{C}$. We set
\begin{itemize}
\item $\mathbb{L}_X:=\mathbb{L}(X)$ the collection of all linear operators $T:X\to X$,
    \item $\mathbb{B}_X:=\mathbb{B}(X)$ the Banach algebra of all continuous elements of $\mathbb{L}_X$, given the operator norm,
    \item $\mathbb{A}_X:=\mathbb{A}(X)$ the group of all invertible elements of $\mathbb{B}_X$ whose inverse is also in $\mathbb{B}_X$, as a topological subspace of $\mathbb{B}_X$.
\end{itemize}

If $X$ has finite dimension $d$, then $\mathbb{L}_X=\mathbb{B}_X$ and $\mathbb{A}_X\cong GL_d(\mathbb{K})$.

\smallskip
The group for the cotranslations we study in this section is the field $\mathbb{K}$, but the formalism presented here allows generalizations of the results to other non commutative by Lie groups.

\smallskip
We use Banach spaces in this section because we want $\mathbb{B}_X$ to have the structure of a Banach algebra, thus we will be able to study derivatives for groupoid morphisms in the sense of G\^ateaux. However, it is easy to see that these constructions apply just as well in other categories that admit such sense of derivatives. For instance, an alternative is to consider the strong topology in $\mathbb{B}_X$ (as usually studied on \cite{Pazy}) instead of the operator norm.

\smallskip
It is worth noting that for groups it is well known that continuous morphisms between Lie groups are immediately smooth \cite[Problem 20-11]{Lee}, however, for cotranslations ({\it i.e.} groupoid morphisms) there are differences to be considered. 

\begin{notation}
    {\rm Consider $\varphi:\mathbb{K}\rimes\mathbb{K}\to \mathbb{B}_X$ and $\psi:\mathbb{K}\to \mathbb{B}_X$. We set the following notation for these limits (independently if they exist or not):
    \begin{itemize}
        \item [i)] $\partial_1\varphi (r,t)=\lim_{h\to 0}\frac{\varphi(r+h,t)-\varphi(r,t)}{h}$,
        \item [ii)] $\partial_2\varphi (r,t)=\lim_{h\to 0}\frac{\varphi(r,t+h)-\varphi(r,t)}{h}$,
        \item [iii)] $\frac{d}{du}\left[\psi(u)\right]=\lim_{h\to 0}\frac{\psi(u+h)-\psi(u)}{h}$.
    \end{itemize}
    
Notation iii) will be useful when we want to apply derivatives to functions obtained as composition of other functions or when we want to make an emphasis on the {\it variable} respect to which we want to derivate, while notations i) y ii) are meant to make emphasis on the {\it position} of the variable respect to which we want to derivate.
    }
\end{notation}

\begin{lemma}\label{153}
    Let $Z:\mathbb{K}\rimes \mathbb{K}\to \mathbb{A}_X$ be a continuous cotranslation. If for every $t\in \mathbb{K}$ the map $r\mapsto Z(r,t)$ is derivable at $r=r_0$, for some $r_0\in \mathbb{K}$, then the map $r\mapsto Z^{\inv}(r,t)$ is derivable at $r=r_0$ and
    $$\partial_1Z^\inv(r_0,t)=-Z^{\inv}(r_0,t)\Big[\partial_1Z(r_0,t)\Big] Z^{\inv}(r_0,t).$$
\end{lemma}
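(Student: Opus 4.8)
The plan is to reduce the statement to the classical formula for the derivative of the inverse of an operator-valued map, treating $t$ as a fixed parameter throughout. First I would fix $t\in\mathbb{K}$ and abbreviate $A(r):=Z(r,t)\in\mathbb{A}_X\subset\mathbb{B}_X$ and $B(r):=Z^\inv(r,t)=A(r)^{-1}$, so that the goal becomes showing that $B$ is differentiable at $r_0$ with derivative $-B(r_0)\,[\partial_1 Z(r_0,t)]\,B(r_0)$, where the limit $\partial_1 Z(r_0,t)$ exists by hypothesis.

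The algebraic heart of the argument is the resolvent-type identity
$$A(r_0+h)^{-1}-A(r_0)^{-1}=-A(r_0+h)^{-1}\big[A(r_0+h)-A(r_0)\big]A(r_0)^{-1},$$
which I would verify directly by multiplying out (both sides collapse to $A(r_0+h)^{-1}-A(r_0)^{-1}$ once one inserts $A(r_0+h)A(r_0+h)^{-1}=\Id$ on the right). Dividing by $h$ turns the difference quotient of $B$ into a product of three factors: $-A(r_0+h)^{-1}$, the difference quotient $\tfrac{1}{h}\big[A(r_0+h)-A(r_0)\big]$, and the constant $A(r_0)^{-1}$.

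It then remains to pass to the limit $h\to 0$ inside this product. The middle factor converges to $\partial_1 Z(r_0,t)$ by hypothesis. For the first factor I would use that $Z$ is continuous and that inversion on the group of units $\mathbb{A}_X$ of the Banach algebra $\mathbb{B}_X$ is continuous, so that $r\mapsto A(r)^{-1}=Z^\inv(r,t)$ is continuous at $r_0$ and hence $A(r_0+h)^{-1}\to A(r_0)^{-1}=Z^\inv(r_0,t)$. Since multiplication in $\mathbb{B}_X$ is jointly continuous, the limit of the product equals the product of the limits, which is exactly $-Z^\inv(r_0,t)\,\big[\partial_1 Z(r_0,t)\big]\,Z^\inv(r_0,t)$; in particular the limit defining $\partial_1 Z^\inv(r_0,t)$ exists.

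The only genuinely non-formal ingredient, and thus the point I expect to be the main obstacle, is the continuity of $r\mapsto A(r)^{-1}$ at $r_0$: this is what licenses moving the limit past the inverse, and it is where the Banach-algebra structure (continuity of inversion on $\mathbb{A}_X$) is essential, rather than merely the pointwise invertibility. Everything else is routine manipulation of difference quotients. I would remark that the earlier involution identity $Z^\inv(r,t)=Z(t+r,-t)$ suggests an alternative route, but differentiating it requires differentiability of $Z(\cdot,-t)$ at the shifted point $t+r_0$ rather than at $r_0$, which does not match the stated hypothesis as cleanly; the computation above stays entirely within the hypotheses and is therefore the approach I would adopt.
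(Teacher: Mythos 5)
Your proof is correct and follows the same underlying idea as the paper's, which differentiates the identity $Z(r,t)Z^{\inv}(r,t)=\Id$ with the product rule and solves for $\partial_1 Z^{\inv}(r,t)$. If anything, your difference-quotient version via the resolvent identity is the more careful of the two: the paper merely asserts that the derivatives of $Z(\cdot,t)$ and $Z^{\inv}(\cdot,t)$ at $r_0$ ``exist simultaneously,'' whereas your argument, by invoking continuity of inversion in $\mathbb{A}_X$ to pass to the limit in the factor $Z^{\inv}(r_0+h,t)$, actually establishes the existence of $\partial_1 Z^{\inv}(r_0,t)$ before identifying its value.
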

\begin{proof}
    By the following
    \begin{eqnarray*}
        Z(r,t) Z^\inv(r,t)=\Id&\Rightarrow&\frac{d}{dr}\left[Z(r,t)Z^\inv(r,t)\right]=0\\
        &\Leftrightarrow&\Big[\partial_1Z(r,t)\Big]\ Z^\inv(r,t)+Z(r,t) \left[\partial_1Z^\inv(r,t)\right]=0\\
        &\Leftrightarrow&\partial_1 Z^\inv(r,t)=-Z^\inv(r,t) \Big[\partial_1 Z(r,t)\Big] Z^\inv(r,t),
    \end{eqnarray*}
    it is easy to see that the derivatives $\partial_1$ at $r=r_0$ exist simultaneously for both $r\mapsto Z(r,t)$ and $r\mapsto Z^\inv(r,t)$.
\end{proof}

The following result is easily obtained with an analogous proof.

\begin{lemma}\label{171}
       Let $Z:\mathbb{K}\rimes \mathbb{K}\to \mathbb{A}_X$ be a continuous cotranslation. If for every $r\in \mathbb{K}$ the map $t\mapsto Z(r,t)$ is derivable at $t_0$, for some $t_0\in \mathbb{K}$, then the map $t\mapsto Z^\inv(r,t)$ is derivable at $t=t_0$ and
       $$\partial_2Z^\inv(r,t_0)=-Z^{\inv}(r,t_0)\Big[\partial_2Z(r,t_0)\Big] Z^{\inv}(r,t_0).$$
\end{lemma}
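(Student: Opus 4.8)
The plan is to mirror the proof of Lemma \ref{153} essentially verbatim, transferring the differentiation from the first slot to the second. The whole argument rests on a single identity: for each fixed $r$ and every $t$ one has $Z(r,t)\,Z^\inv(r,t)=\Id$, which holds by the very definition of $Z^\inv$ and uses nothing about the cotranslation property beyond the fact that $Z$ takes values in $\mathbb{A}_X$ (so that $Z^\inv$ is well defined). Throughout, $r$ plays the role of an inert parameter, since the hypothesis concerns differentiability in $t$ with $r$ held fixed.

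First I would establish that $t\mapsto Z^\inv(r,t)$ is differentiable at $t_0$. Since $\mathbb{A}_X$ is open in the Banach algebra $\mathbb{B}_X$ and the inversion map $T\mapsto T^{-1}$ is differentiable there, with derivative $H\mapsto -T^{-1}HT^{-1}$ at $T$, the map $Z^\inv(r,\cdot)$ is the composition of $t\mapsto Z(r,t)$, differentiable at $t_0$ by hypothesis, with this differentiable inversion map; hence it is differentiable at $t_0$. This is the rigorous form of the remark in Lemma \ref{153} that the two derivatives exist simultaneously.

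Next I would differentiate the identity with respect to $t$ at $t=t_0$. Because multiplication $\mathbb{B}_X\times\mathbb{B}_X\to\mathbb{B}_X$ is continuous and bilinear, the Leibniz rule applies to the $\partial_2$-derivative, and since $\Id$ is constant in $t$ its derivative vanishes, giving
$$\Big[\partial_2 Z(r,t_0)\Big]\,Z^\inv(r,t_0)+Z(r,t_0)\,\Big[\partial_2 Z^\inv(r,t_0)\Big]=0.$$
Left-multiplying by $Z^\inv(r,t_0)$ and rearranging yields exactly
$$\partial_2 Z^\inv(r,t_0)=-Z^\inv(r,t_0)\Big[\partial_2 Z(r,t_0)\Big]Z^\inv(r,t_0),$$
as claimed.

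I expect no genuine obstacle here: the entire content is the differentiability of operator inversion and the Leibniz rule, both standard in a Banach algebra, and the algebraic manipulation is identical to that of Lemma \ref{153}. The only points worth flagging explicitly are that $r$ is inert and that, exactly as before, the derivatives of $Z(r,\cdot)$ and $Z^\inv(r,\cdot)$ at $t_0$ exist simultaneously, so the implication in the statement is in fact an equivalence.
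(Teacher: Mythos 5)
Your proposal is correct and follows essentially the same route as the paper, which simply differentiates the identity $Z(r,t)Z^{\inv}(r,t)=\Id$ in the second variable via the Leibniz rule, exactly as in Lemma \ref{153}. Your explicit appeal to the differentiability of operator inversion in a Banach algebra is a welcome clarification of the paper's brief remark that the two derivatives exist simultaneously, but it is not a different argument.
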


\begin{lemma}
    Let $Z:\mathbb{K}\rimes \mathbb{K}\to \mathbb{A}_X$ be a continuous cotranslation. Suppose that for every $t\in\mathbb{K}$ the map $r\mapsto Z(r,t)$ is derivable at $r=0$. Then, for every $t\in \mathbb{K}$ the function $r\mapsto Z(r,t)$ is derivable and
    $$\partial_1 Z(r,t)=\Big[\partial_1 Z(0,t+r)\Big] Z(r,-r)-Z(r,t)\Big[\partial_1Z(0,r)\Big] Z(r,-r).$$
\end{lemma}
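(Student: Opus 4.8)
The plan is to turn the cotranslation law into one structural identity, express $Z(r,t)$ through values whose dependence on $r$ is explicit, and then differentiate. Writing the group $\mathbb{K}$ additively, the cotranslation identity reads $Z(g,k+h)=Z(h+g,k)\,Z(g,h)$; taking $g=0$, $k=t$, $h=r$ gives the \emph{fundamental relation}
\begin{equation*}
Z(0,t+r)=Z(r,t)\,Z(0,r).
\end{equation*}
Since groupoid morphisms preserve involutions, the involution identity $Z^{\inv}(g,h)=Z(hg,h^{-1})$ yields $Z^{\inv}(0,r)=Z(r,-r)$, i.e. $Z(0,r)^{-1}=Z(r,-r)$. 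Combining the two gives the representation
\begin{equation*}
Z(r,t)=Z(0,t+r)\,Z(r,-r)=Z(0,t+r)\,Z(0,r)^{-1},
\end{equation*}
in which the entire $r$-dependence of the right-hand side sits in the \emph{second} slot of the factors $Z(0,\cdot)$.

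First I would fix $t$ and differentiate this representation in $r$. By the product rule together with the chain rule (the second slot $t+r$, resp. $r$, has $r$-derivative $1$), and handling the inverse factor $Z(0,r)^{-1}$ via Lemma~\ref{171}, the derivative takes the shape
\begin{equation*}
\partial_1 Z(r,t)=\big[\partial_2 Z(0,t+r)\big]Z(r,-r)-Z(r,t)\big[\partial_2 Z(0,r)\big]Z(r,-r),
\end{equation*}
using $Z(0,r)^{-1}=Z(r,-r)$ and $Z(r,t)Z(0,r)=Z(0,t+r)$ to simplify. This is already the target formula, except that it carries the \emph{second}-slot derivatives $\partial_2 Z(0,\cdot)$ instead of the \emph{first}-slot derivatives $\partial_1 Z(0,\cdot)$ supplied by the hypothesis, so the remaining task is to exchange one for the other.

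The bridge is again the cotranslation law: from $Z(0,s+h)=Z(h,s)\,Z(0,h)$ one gets
\begin{equation*}
\frac{Z(0,s+h)-Z(0,s)}{h}=\frac{Z(h,s)-Z(0,s)}{h}\,Z(0,h)+Z(0,s)\,\frac{Z(0,h)-\Id}{h}.
\end{equation*}
As $h\to0$ the first summand tends to $\partial_1 Z(0,s)$ by hypothesis (and $Z(0,h)\to\Id$ by continuity), so $\partial_2 Z(0,s)$ differs from $\partial_1 Z(0,s)$ only by a term $Z(0,s)\,L$, where $L$ denotes the limit of the second-slot quotient $\tfrac1h\big(Z(0,h)-\Id\big)$ at the origin. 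Substituting $\partial_2 Z(0,s)=\partial_1 Z(0,s)+Z(0,s)L$ into the previous display, the two $L$-contributions become $Z(0,t+r)L\,Z(r,-r)$ and $Z(r,t)Z(0,r)L\,Z(r,-r)=Z(0,t+r)L\,Z(r,-r)$ — equal, by the fundamental relation — and therefore cancel, leaving exactly
\begin{equation*}
\partial_1 Z(r,t)=\big[\partial_1 Z(0,t+r)\big]Z(r,-r)-Z(r,t)\big[\partial_1 Z(0,r)\big]Z(r,-r).
\end{equation*}

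The main obstacle is precisely the quotient $\tfrac1h\big(Z(0,h)-\Id\big)$: it is a second-slot difference quotient of $Z$ at the origin, and the pointwise hypothesis on first-slot derivatives does not by itself guarantee that it converges, nor that $\partial_2 Z(0,\cdot)$ exists. The decisive structural point is that, thanks to the equality $Z(r,t)Z(0,r)=Z(0,t+r)$, this potentially ill-behaved quantity enters the two halves of the computation with identical coefficient and opposite sign. To make both the existence of $\partial_1 Z(r,t)$ and the formula rigorous without presupposing the existence of $\partial_2 Z(0,0)$, I would run the whole argument at the level of the difference quotient $\tfrac1h\big(Z(r+h,t)-Z(r,t)\big)$ — substituting the representation at $r+h$ and $r$, telescoping the product of differences, and invoking the elementary identity $B^{-1}-A^{-1}=B^{-1}(A-B)A^{-1}$ for the inverse factor — so that the offending $\tfrac1h\big(Z(0,h)-\Id\big)$ appears with cancelling coefficients and drops out in the limit, while the surviving terms converge to the stated right-hand side by the hypothesis and the continuity of $Z$.
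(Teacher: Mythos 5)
Your starting point is the same as the paper's: the identity $Z(0,t+r)=Z(r,t)\,Z(0,r)$, hence $Z(r,t)=Z(0,t+r)\,Z^{\inv}(0,r)$ with $Z^{\inv}(0,r)=Z(r,-r)$, and the final formula agrees. Where you diverge is in how you differentiate this representation. Your middle step treats the $r$-dependence as sitting in the \emph{second} slot and so runs through $\partial_2 Z(0,\cdot)$ and the quotient $L=\lim_h \tfrac1h(Z(0,h)-\Id)$, neither of which is guaranteed to exist under the hypothesis; you correctly flag this and propose to repair it at the level of difference quotients. The repair works, but your stated mechanism --- that $\tfrac1h(Z(0,h)-\Id)$ ``appears with cancelling coefficients'' --- is not literally true for finite $h$: after telescoping $Z(0,t+r+h)Z(0,r+h)^{-1}-Z(0,t+r)Z(0,r)^{-1}$ the two coefficients of that quotient agree only up to $O(h)$, and one must additionally observe that the residual has the form $\bigl(Z(0,h)-\Id\bigr)$ times a \emph{convergent} first-slot difference quotient, so it tends to $0$ by continuity alone. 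The paper's proof dissolves this difficulty before it arises: since $Z(0,t+r+h)=Z(h,t+r)Z(0,h)$ and $Z(0,r+h)=Z(h,r)Z(0,h)$, the two copies of $Z(0,h)$ cancel \emph{algebraically} inside the representation, giving $Z(r+h,t)=Z(h,t+r)\,Z^{\inv}(h,r)$; telescoping
$$\frac{Z(h,t+r)Z^{\inv}(h,r)-Z(0,t+r)Z^{\inv}(0,r)}{h}$$
then produces only first-slot difference quotients at the origin, which converge by hypothesis and by Lemma \ref{153}. So your plan is salvageable and lands on the same computation, but the detour through $\partial_2$ and the unproven cancellation claim should be replaced by (or reduced to) the exact identity $Z(r+h,t)=Z(h,t+r)\,Z^{\inv}(h,r)$, which is the paper's first line.
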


\begin{proof}
We suppose the following limit exists
$$\lim_{h\to 0}\frac{Z(0+h,t)-Z(0,t)}{h}.$$

We have
\begin{eqnarray*}
    \frac{Z(r+h,t)-Z(r,t)}{h}&=&\frac{Z(h,t+r)Z^\inv(h,r)-Z(0,r+t)Z^\inv(0,r)}{h}\\
    \\&=&\frac{Z(h,t+r)-Z(0,r+t)}{h}Z^\inv(h,r)\\
   \\&&+Z(0,r+t)\frac{Z^\inv(h,r)-Z^\inv(0,r)}{h}.
\end{eqnarray*}

Both addends at the right hand side have limit when $h\to 0$ by hypothesis (and Lemma \ref{153}), thus the left hand side as limit as well, hence $r\mapsto Z(r,t)$ is derivable at every point for every fixed $t\in\mathbb{K}$ and
\begin{eqnarray*}
    \partial_1Z(r,t)&=&\Big[\partial_1Z(0,t+r)\Big]Z^\inv(0,r)+Z(0,r+t)\Big[\partial_1Z^{\inv}(0,r)\Big]\\
    &=&\Big[\partial_1Z(0,t+r)\Big]Z(r,-r)-Z(0,r+t)Z(r,-r)\Big[\partial_1Z(0,r)\Big]Z(r,-r)\\
    &=&\Big[\partial_1Z(0,t+r)\Big]Z(r,-r)-Z(r,t)\Big[\partial_1Z(0,r)\Big]Z(r,-r).
\end{eqnarray*}
\end{proof}

Analogously we have the following result.

\begin{lemma}\label{165}
    Let $Z:\mathbb{K}\rimes\mathbb{K}\to \mathbb{A}_X$ be a continuous cotranslation. Suppose that for every $r\in\mathbb{K}$ the function $t\mapsto Z(r,t)$ is derivable at $t=0$. Then, for every $r\in \mathbb{K}$ the function $t\mapsto Z(r,t)$ is derivable and
    $$\partial_2Z(r,t)=\Big[\partial_2Z(r+t,0)\Big]Z(r,t).$$
\end{lemma}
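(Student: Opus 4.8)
The plan is to differentiate in the second variable by first isolating the perturbation through the cotranslation identity. Writing the groupoid morphism property $Z(g,kh)=Z(hg,k)Z(g,h)$ additively for the group $G=\mathbb{K}$, and letting the perturbation $h$ play the role of $k$ (so that $k=h$ and the remaining group element is $t$), I obtain the decomposition
$$Z(r,t+h)=Z(r+t,h)\,Z(r,t),$$
which places the entire $h$-dependence in the single left factor $Z(r+t,h)$, whose first argument $r+t$ is constant and whose second argument is perturbed near $0$. This is the decisive algebraic move, and it is the analogue of the splitting used in the first-variable lemma.

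With this in hand, I would form the difference quotient and factor $Z(r,t)$ out on the right:
$$\frac{Z(r,t+h)-Z(r,t)}{h}=\frac{Z(r+t,h)-\Id}{h}\,Z(r,t).$$
Since $Z(g,e)=\Id$, we have $\Id=Z(r+t,0)$, so the left factor is exactly the difference quotient at $0$ of the map $s\mapsto Z(r+t,s)$. Applying the hypothesis to the first-argument value $r+t$, this map is derivable at $s=0$, hence the factor converges to $\partial_2 Z(r+t,0)$ as $h\to 0$.

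Finally, because $Z(r,t)\in\mathbb{A}_X\subset\mathbb{B}_X$ is a fixed bounded operator, right multiplication by it is continuous on $\mathbb{B}_X$, so the limit passes through the product and yields simultaneously the existence of $\partial_2 Z(r,t)$ and the stated formula $\partial_2 Z(r,t)=\bigl[\partial_2 Z(r+t,0)\bigr]Z(r,t)$. The only genuine choice in the argument is the decomposition: the perturbation must be made to occupy the factor whose \emph{second} argument tends to $0$, so that the hypothesis of derivability at $t=0$ can be invoked. I do not expect a serious obstacle here; in fact, unlike the first-variable lemma, this splitting requires no inverse and hence no appeal to Lemma \ref{171}, which is precisely why the resulting expression is a single term rather than a difference of two.
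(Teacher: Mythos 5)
Your decomposition $Z(r,t+h)=Z(r+t,h)\,Z(r,t)$, the factoring of $Z(r,t)$ on the right, and the identification of the remaining factor with the difference quotient of $s\mapsto Z(r+t,s)$ at $s=0$ are exactly the steps of the paper's proof. The argument is correct and essentially identical to the one given there.
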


\begin{proof}
    Suppose that for every $r\in \mathbb{K}$ the following limit exists
    $$\lim_{h\to 0}\frac{Z(r,h)-Z(r,0)}{h}.$$

    We have
    \begin{eqnarray*}
        \frac{Z(r,t+h)-Z(r,t)}{h}&=&\frac{Z(r+t,h)Z(r,t)-Z(r,t)}{h}\\
        &=&\frac{Z(r+t,h)-\Id}{h}Z(r,t)\\
        &=&\frac{Z(r+t,h)-Z(r+t,0)}{h}Z(r,t).
    \end{eqnarray*}

By taking limits $h\to 0$ we obtain the desired identity.
\end{proof}

Now we show results which show how to deduce derivability respect to one coordinate when we have information about the other.

\begin{lemma}\label{147}
    Let $Z:\mathbb{K}\rimes\mathbb{K}\to \mathbb{A}_X$ be a continuous cotranslation such that for every $r\in \mathbb{K}$ the map $t\mapsto Z(r,t)$ is derivable. Then, the map $r\mapsto Z(r,t)$ is derivable for every $t\in \mathbb{K}$ and
    $$\partial_1Z(r,t)=\partial_2Z(r,t)-Z(r,t) \Big[\partial_2Z(r,0)\Big].$$
\end{lemma}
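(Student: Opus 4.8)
The plan is to reduce everything to derivatives in the second variable, which are available by hypothesis, by exploiting the groupoid identity to rewrite a first-variable increment as a second-variable increment. Writing the cotranslation relation $Z(g,kh)=Z(hg,k)\circ Z(g,h)$ additively for $G=\mathbb{K}$ and choosing the three arguments so that the increment lands in the second slot, one obtains the key identity
$$Z(r+h,t)=Z(r,t+h)\,Z^\inv(r,h),\qquad \forall\, r,t,h\in\mathbb{K},$$
where I have used $Z(r,h)^{-1}=Z^\inv(r,h)$. The point of this rearrangement is that the right-hand side now depends on $h$ only through the second variable of $Z$, which the hypothesis controls.

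First I would form the difference quotient for $\partial_1$ and insert the factorization $Z(r,t)=Z(r,t)\,Z^\inv(r,0)$, valid since $Z(r,0)=\Id$ and hence $Z^\inv(r,0)=\Id$. Using the key identity this gives the familiar product-rule split
\begin{align*}
\frac{Z(r+h,t)-Z(r,t)}{h}
&=\frac{Z(r,t+h)\,Z^\inv(r,h)-Z(r,t)\,Z^\inv(r,0)}{h}\\
&=\frac{Z(r,t+h)-Z(r,t)}{h}\,Z^\inv(r,h)+Z(r,t)\,\frac{Z^\inv(r,h)-Z^\inv(r,0)}{h}.
\end{align*}

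Next I would take the limit $h\to 0$ termwise. The factor $\tfrac{1}{h}\big(Z(r,t+h)-Z(r,t)\big)$ converges to $\partial_2 Z(r,t)$ by hypothesis; $Z^\inv(r,h)\to Z^\inv(r,0)=\Id$ by continuity of $Z$ and of inversion on $\mathbb{A}_X$; and $\tfrac{1}{h}\big(Z^\inv(r,h)-Z^\inv(r,0)\big)$ converges to $\partial_2 Z^\inv(r,0)$ by Lemma \ref{171}. Since all the limits on the right exist, the left-hand side has a limit, so $r\mapsto Z(r,t)$ is derivable and
$$\partial_1 Z(r,t)=\partial_2 Z(r,t)+Z(r,t)\,\partial_2 Z^\inv(r,0).$$
Finally, Lemma \ref{171} evaluated at $t_0=0$ gives $\partial_2 Z^\inv(r,0)=-Z^\inv(r,0)\big[\partial_2 Z(r,0)\big]Z^\inv(r,0)=-\partial_2 Z(r,0)$, again using $Z^\inv(r,0)=\Id$, which yields the claimed formula.

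The only genuinely nonroutine step is spotting the right specialization of the groupoid axiom: one must arrange the arguments so that the first slot of $Z$ stays fixed at $r$ while the increment $h$ is pushed entirely into the second slot and into the inverse factor. Once that identity is in place the argument is a standard product-rule limit, and the continuity of $Z$ together with Lemma \ref{171} handle all convergence issues; no uniformity beyond pointwise limits is needed, since the Banach-algebra product is jointly continuous and one of the two factors converges to $\Id$.
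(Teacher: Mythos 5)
Your proof is correct and follows essentially the same route as the paper's: both rest on the specialization $Z(r,t+h)=Z(r+h,t)\,Z(r,h)$ of the cotranslation identity together with a product-rule splitting of the difference quotient. The only cosmetic difference is that you solve for the first-variable quotient at the outset (right-multiplying by $Z^\inv(r,h)$ and invoking Lemma \ref{171} to get $\partial_2 Z^\inv(r,0)=-\partial_2 Z(r,0)$), whereas the paper starts from the second-variable quotient and isolates the first-variable one at the end; both rearrangements rely on the same continuity of $h\mapsto Z^\inv(r,h)$ at $h=0$.
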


\begin{proof}
    Suppose that for every $r,t\in \mathbb{K}$ the following limit exists
    $$\lim_{h\to 0}\frac{Z(r,t+h)-Z(r,t)}{h}.$$
    
    Note that
     \begin{eqnarray*}
        \frac{Z(r,t+h)-Z(r,t)}{h}&=&\frac{Z(r+h,t)Z(r,h)-Z(r,t)}{h} \\
        \\&=&\frac{Z(r+h,t)Z(r,h)-Z(r,t)Z(r,h)}{h}+\frac{Z(r,t)Z(r,h)-Z(r,t)}{h} \\
        \\&=&\frac{Z(r+h,t)-Z(r,t)}{h}Z(r,h)+Z(r,t)\frac{Z(r,h)-Z(r,0)}{h}.
    \end{eqnarray*}
    from where, reorganizing terms and taking limits we obtain
    \begin{eqnarray*}
        \lim_{h\to 0}\frac{Z(r+h,t)-Z(r,t)}{h}&=&\Big[\partial_2Z(r,t)\Big] Z(r,0)-Z(r,t)\Big[ \partial_2Z(r,0)\Big]\\
        \\&=&\partial_2Z(r,t)-Z(r,t)\Big[\partial_2Z(r,0)\Big].
    \end{eqnarray*}
\end{proof}

We would like to give a reciprocal result to Lemma \ref{147}, but we have not yet been able to prove or find a counterexample for this. As a summary of the previous lemmas we state the following:

\begin{corollary}
    For a continuous cotranslation $Z: \mathbb{K}\rimes\mathbb{K}\to \mathbb{A}_X$, the following statements are equivalent:
    \begin{itemize}
        \item [i)] $t\mapsto Z(r,t)$ is derivable at $t=0$, for every $r\in \mathbb{K}$.
        \item [ii)] $t\mapsto Z(r,t)$ is derivable for every $r\in \mathbb{K}$.
    \end{itemize}

    Moreover, each one of them implies the following statements, which are equivalent:
    \begin{itemize}
        \item [iii)] $r\mapsto Z(r,t)$ is derivable at $r=0$, for every $t\in \mathbb{K}$.
        \item [iv)] $r\mapsto Z(r,t)$ is derivable for every $t\in \mathbb{K}$.
    \end{itemize}
\end{corollary}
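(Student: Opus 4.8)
The plan is to treat this as a bookkeeping corollary: each of the four implications is either trivial or an immediate citation of one of the preceding lemmas, and the only point requiring care is the order in which those lemmas are chained. I would organize the argument as two separate equivalences together with one bridging implication from the $t$-block to the $r$-block.

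First, for (i)$\Leftrightarrow$(ii): the implication (ii)$\Rightarrow$(i) is immediate, since derivability of $t\mapsto Z(r,t)$ at every point in particular yields derivability at $t=0$. For (i)$\Rightarrow$(ii) I would invoke Lemma~\ref{165} verbatim, whose hypothesis is exactly (i) and whose conclusion is exactly (ii). Symmetrically, for (iii)$\Leftrightarrow$(iv): the implication (iv)$\Rightarrow$(iii) is immediate, and (iii)$\Rightarrow$(iv) is precisely the content of the lemma preceding Lemma~\ref{165}, namely the one that promotes derivability of $r\mapsto Z(r,t)$ at $r=0$ to derivability at every point.

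It then remains to establish that the pair $\{$(i),(ii)$\}$ implies the pair $\{$(iii),(iv)$\}$. Here I would assume (ii) --- which is legitimate once the first equivalence is in place --- and apply Lemma~\ref{147}, whose hypothesis ``$t\mapsto Z(r,t)$ is derivable for every $r$'' is exactly (ii) and whose conclusion ``$r\mapsto Z(r,t)$ is derivable for every $t$'' is exactly (iv). Together with (iv)$\Rightarrow$(iii) this closes the diagram, so both (i) and (ii) imply both (iii) and (iv).

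The one subtlety to flag is that Lemma~\ref{147} demands \emph{full} derivability of $t\mapsto Z(r,t)$, not merely derivability at $t=0$; consequently I cannot feed hypothesis (i) into it directly and must first pass through Lemma~\ref{165} to upgrade (i) to (ii). I would also record that the corollary deliberately asserts only a one-way implication from the $t$-block to the $r$-block, with no reverse arrow: a converse to Lemma~\ref{147} is exactly the open question noted immediately after that lemma, so (iii)/(iv) are not known to imply (i)/(ii).
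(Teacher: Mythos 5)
Your proposal is correct and matches exactly how the paper intends this corollary to be read: the paper states it as a summary of the preceding lemmas, with (i)$\Rightarrow$(ii) given by Lemma~\ref{165}, (iii)$\Rightarrow$(iv) given by the lemma preceding it, the bridge (ii)$\Rightarrow$(iv) given by Lemma~\ref{147}, and the remaining implications trivial. Your remark that (i) must first be upgraded to (ii) before feeding it into Lemma~\ref{147}, and that the missing reverse arrow corresponds to the open converse of that lemma, is precisely the right reading.
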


Ideally we would show that a cotranslation which is (separately) derivable respect to both variables is differentiable as a two-variable function. However, as the following lemma shows, an extra condition is needed for this.

\begin{lemma}
Let $Z:\mathbb{K}\rimes\mathbb{K}\to \mathbb{A}_X$ a continuous cotranslation. Suppose that for every $r\in \mathbb{K}$ the map $t\mapsto Z(r,t)$ is derivable. If the mapping
$$(r,t)\mapsto \partial_2 Z(r,t)\,,$$
is continuous, then $Z$ is differentiable as a two-variable function.
\end{lemma}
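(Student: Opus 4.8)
The plan is to upgrade the separate derivability of $Z$ to full (Fr\'echet) differentiability by first showing that \emph{both} partial derivatives are continuous and then invoking the classical criterion that a map with continuous partials is differentiable. Since $\mathbb{K}$ is $\mathbb{R}$ or $\mathbb{C}$, the domain $\mathbb{K}\rimes\mathbb{K}$ is finite dimensional, so differentiability of $Z$ as a two-variable function amounts to the existence of a bounded linear map $DZ(r,t):\mathbb{K}^2\to\mathbb{B}_X$ with
\[
Z(r+h,t+k)-Z(r,t)=\big[\partial_1Z(r,t)\big]h+\big[\partial_2Z(r,t)\big]k+o\big(\abs{(h,k)}\big).
\]
The existence of both partials is already available: the hypothesis on $t\mapsto Z(r,t)$ together with Lemma \ref{147} guarantees that $r\mapsto Z(r,t)$ is derivable for every $t$, with the structural identity
\[
\partial_1Z(r,t)=\partial_2Z(r,t)-Z(r,t)\big[\partial_2Z(r,0)\big].
\]

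First I would show that $\partial_1Z$ is continuous. In the displayed formula the right-hand side involves only $\partial_2Z$, continuous by hypothesis; the map $Z$, continuous by assumption; and $r\mapsto\partial_2Z(r,0)$, continuous as the restriction of the continuous map $(r,t)\mapsto\partial_2Z(r,t)$ to $t=0$. Since multiplication in the Banach algebra $\mathbb{B}_X$ is jointly continuous (indeed $\norm{AB}\le\norm{A}\,\norm{B}$), the product $(r,t)\mapsto Z(r,t)\big[\partial_2Z(r,0)\big]$ is continuous, and therefore so is $\partial_1Z$, being a difference of continuous maps. Thus both partial derivatives of $Z$ exist everywhere and are continuous.

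With both partials continuous, I would establish the $o$-estimate directly through the integral form of the fundamental theorem of calculus, the correct substitute for the mean value theorem in the Banach-valued setting. Splitting
\[
Z(r+h,t+k)-Z(r,t)=\big[Z(r+h,t+k)-Z(r,t+k)\big]+\big[Z(r,t+k)-Z(r,t)\big],
\]
and using that $s\mapsto Z(r+sh,t+k)$ and $s\mapsto Z(r,t+sk)$ are $C^1$ (their $s$-derivatives being $h\,\partial_1Z$ and $k\,\partial_2Z$, continuous in $s$), each bracket becomes an integral,
\[
Z(r+h,t+k)-Z(r,t+k)=\left(\int_0^1\partial_1Z(r+sh,t+k)\,ds\right)h,
\]
\[
Z(r,t+k)-Z(r,t)=\left(\int_0^1\partial_2Z(r,t+sk)\,ds\right)k.
\]
Subtracting $[\partial_1Z(r,t)]h+[\partial_2Z(r,t)]k$ and invoking continuity of the partials, the integrands converge to $0$ uniformly in $s\in[0,1]$ as $(h,k)\to 0$, which yields the required $o(\abs{(h,k)})$ bound and hence differentiability.

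The main obstacle is conceptual rather than computational: separate derivability in each variable does not by itself imply differentiability of a two-variable map, which is exactly why continuity of $(r,t)\mapsto\partial_2Z(r,t)$ is imposed. What makes a hypothesis on $\partial_2Z$ \emph{alone} sufficient is the identity from Lemma \ref{147}, which expresses $\partial_1Z$ through $\partial_2Z$ and $Z$ and thereby transfers continuity from the second partial to the first automatically. A secondary technical care is that, because $Z$ takes values in $\mathbb{B}_X$, one cannot use the mean value theorem as an equality; replacing it by the integral representation above, valid precisely because the partials are continuous, circumvents this difficulty.
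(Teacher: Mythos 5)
Your proof is correct, but it takes a genuinely different route from the paper's. The paper never passes through continuity of $\partial_1Z$: it applies the cotranslation identity directly to the two-variable increment, writing $Z(r+h_1,t+h_2)=Z(r+h_1+h_2,t)\,Z(r+h_1,h_2)$ and splitting
$$\frac{Z(r+h_1,t+h_2)-Z(r,t)}{|h_1|+|h_2|}=\frac{h_1+h_2}{|h_1|+|h_2|}\,\frac{Z(r+h_1+h_2,t)-Z(r,t)}{h_1+h_2}\,Z(r+h_1,h_2)+Z(r,t)\,\frac{Z(r+h_1,h_2)-Z(r+h_1,0)}{h_2}\,\frac{h_2}{|h_1|+|h_2|},$$
then arguing that each factor behaves well as $(h_1,h_2)\to 0$; the continuity of $\partial_2Z$ enters only to control the difference quotient $\big(Z(r+h_1,h_2)-Z(r+h_1,0)\big)/h_2$, whose base point moves with $h_1$. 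Your argument instead first transfers continuity from $\partial_2Z$ to $\partial_1Z$ via the identity of Lemma \ref{147} and then proves the standard criterion (continuous partials imply Fr\'echet differentiability) with the integral form of the fundamental theorem of calculus. What your route buys is modularity and precision: it isolates exactly where the continuity hypothesis is used, produces the explicit derivative $DZ(r,t)(h,k)=\partial_1Z(r,t)h+\partial_2Z(r,t)k$, and sidesteps the slightly loose claim in the paper that ``all elements on the right-hand side have a limit'' (the factor $(h_1+h_2)/(|h_1|+|h_2|)$ does not converge; one really needs boundedness of that factor times convergence of the adjacent difference quotient). What the paper's route buys is brevity and a proof that stays entirely inside the groupoid-morphism algebra, with no integration; it also works verbatim in settings where one prefers not to integrate operator-valued functions. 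Both arguments rely on Lemma \ref{147} and on the continuity of $Z$ itself, so the hypotheses are used identically in substance.
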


\begin{proof}
    From Lemma \ref{147} we know that as for every $r\in \mathbb{K}$ the map $t\mapsto Z(r,t)$ is derivable, then  $r\mapsto Z(r,t)$ is derivable for every $t\in \mathbb{R}$. Hence, for $h_1,h_2\in \mathbb{K}$, with $h_2\neq 0$ and $h_1+h_2\neq 0$, we have
    \begin{align*}
        \frac{Z(r+h_1,t+h_2)-Z(r,t)}{|h_1|+|h_2|}=&\frac{Z(r+h_1+h_2,t)-Z(r,t)}{|h_1|+|h_2|}Z(r+h_1,h_2)\\
        &+Z(r,t)\frac{Z(r+h_1,h_2)-\Id}{|h_1|+|h_2|}\\
        =&\frac{h_1+h_2}{|h_1|+|h_2|}\frac{Z(r+h_1+h_2,t)-Z(r,t)}{h_1+h_2}Z(r+h_1,h_2)\\
        &+Z(r,t)\frac{Z(r+h_1,h_2)-Z(r+h_1,0)}{h_2}\frac{h_2}{|h_1|+|h_2|}.
    \end{align*}

All elements at the right hand side on the last equality have limit when $(h_1,h_2)\to 0$, since $Z$ is derivable respect to each of its variables and $(r,t)\mapsto \partial_2Z(r,t)$ is continuous. If either $h_2=0$ or $h_1+h_2=0$, some terms nullify at the second equality.
\end{proof}

\begin{remark}
    {\rm Given the identity found on Lemma \ref{165}, in order to satisfy the continuity of $(r,t)\mapsto \partial_2Z(r,t)$ required on the previous lemma it is enough to verify that $r\mapsto \partial_2Z(r,0)$ is continuous.}
\end{remark}

In the following we study the relation of cotranslations and nonautonomous differential equations. As we want to consider more general spaces than $\mathbb{R}^d$, let us consider the following definition:

\begin{definition}
A linear nonautonomous differential equation on $X$ is an equation of the form  
    $$\frac{d x}{dt}=A(t)x(t),$$
    where $t\mapsto A(t)\in \mathbb{L}_X$, and its solutions are differentiable maps $x:\mathbb{K}\to X$.
\end{definition}

Even in the autonomous case ({\it i.e.}, $t\mapsto A(t)$ is constant) the existence of solutions of these kind of equations can be a hard problem when $X$ is an infinite dimensional space. On the other hand, an usual generalization is to look for solutions only partially defined, for instance with domain $[0,\infty)\subset \mathbb{R}$. Moreover, on infinite dimensional spaces it may be interesting to study such equations where $A$ is not globally defined, but only on a dense subspace, in which case it can be difficult enough to find solutions defined on a compact interval like $[0,t]$, in contrast to the solutions defined on the whole group $\mathbb{K}$ as we propose. For more details we refer the reader to  \cite[Chapter 4]{Pazy}). 

\smallskip
On the other hand, the nonautonomous case presents even greater difficulties. On finite dimension, it is well known that it is enough to ask the function $t\mapsto A(t)$ to be locally integrable in order to guarantee the existence and 
uniqueness of solutions, {\it i.e.}, the existence of an evolution matrix. On infinite dimensional spaces, the problem for the existence of globally defined solutions is much harder. A partial result stays that if $t\mapsto A(t)\in \mathbb{B}_X$ is continuous under the uniform operator norm, then we have the existence and 
uniqueness of solutions defined on a bounded interval \cite[Theorem 5.1.1]{Pazy}.

\smallskip
In general, the problem of existence and 
uniqueness of globally defined solutions has not been solved, but we only know specific conditions under which we have this, as hyperbolicity or parabolicity (for more details we refer the reader to \cite[Chapter 5]{Pazy}). We dedicate the end of this section to give an answer to this problem using the structure of cotranslations.

\begin{proposition}\label{178}
Let $Z:\mathbb{K}\rimes\mathbb{K}\to \mathbb{A}_X$ a continuous cotranslation. Suppose the map $t\mapsto Z(r,t)$ is derivable for every $r\in \mathbb{R}$. Define $A:\mathbb{K}\to \mathbb{L}_X$ and the operator $\Psi:\mathbb{K}^2\to \mathbb{A}_X$ by
$$A(u):=\partial_2Z(u,0),\qquad\Psi(u,v):=Z(v,u-v),$$ 
    then, the following are verified:
    \begin{itemize}
    \item [i)] $\Psi(u,v)\Psi(v,w)=\Psi(u,w)$ for every $u,v,w\in \mathbb{K}$,
        \item [ii)] $\frac{d\Psi}{du}=A(u)\Psi(u,v)$,
        \item [iii)] $\frac{d\Psi}{dv}=-\Psi(u,v)A(v)$,
        \item [iv)] for every $\xi\in X$, the map $\psi_{v,\xi}:\mathbb{K}\to X$, given by $\psi_{v,\xi}(u)=\left[\Psi(u,v)\right](\xi)$ verifies $\psi_{v,\xi}(v)=\xi$ and is a solution to the equation 
    \begin{equation}\label{170}
    \frac{dx}{du}=A(u)x(u).
    \end{equation}
    \end{itemize}
\end{proposition}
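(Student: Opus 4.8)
The plan is to derive everything from the single cotranslation identity, which on the abelian group $\mathbb{K}\rimes\mathbb{K}$ reads $Z(g,k+h)=Z(h+g,k)\,Z(g,h)$ (the additive form of the groupoid-morphism relation used in Example \ref{159}), together with the two differentiation lemmas already in hand. First I would prove the cocycle property i) purely algebraically. Writing $\Psi(u,v)=Z(v,u-v)$ and $\Psi(v,w)=Z(w,v-w)$, I would apply the identity with $g=w$, $h=v-w$, $k=u-v$, so that $h+g=v$ and $k+h=u-w$; this gives $Z(w,u-w)=Z(v,u-v)\,Z(w,v-w)$, that is $\Psi(u,w)=\Psi(u,v)\Psi(v,w)$. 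Specializing $u=v$ yields $\Psi(v,w)\Psi(w,v)=Z(v,0)=\Id$, so $\Psi(w,v)$ is the inverse of $\Psi(v,w)$; and the initial condition in iv) is immediate, since $\psi_{v,\xi}(v)=[\Psi(v,v)](\xi)=[Z(v,0)](\xi)=\xi$.

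For ii) I would differentiate $\Psi(u,v)=Z(v,u-v)$ in $u$ with $v$ fixed. Only the second argument of $Z$ depends on $u$, and at unit rate, so the chain rule gives $\frac{d\Psi}{du}=\partial_2 Z(v,u-v)$; the derivability hypothesis on $t\mapsto Z(r,t)$ is exactly what legitimizes this. Then I would invoke Lemma \ref{165}, namely $\partial_2 Z(r,t)=[\partial_2 Z(r+t,0)]\,Z(r,t)$, with $r=v$ and $t=u-v$ (so $r+t=u$), obtaining $\partial_2 Z(v,u-v)=[\partial_2 Z(u,0)]\,Z(v,u-v)=A(u)\Psi(u,v)$, which is ii).

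For iii) the cleanest route is to differentiate the cocycle identity $\Psi(u,v)\Psi(v,w)=\Psi(u,w)$ in $v$ with $u,w$ fixed. The right-hand side is constant in $v$, so $0=\big[\tfrac{d}{dv}\Psi(u,v)\big]\Psi(v,w)+\Psi(u,v)\big[\tfrac{d}{dv}\Psi(v,w)\big]$, and by ii) the last bracket equals $A(v)\Psi(v,w)$. Right-multiplying by $\Psi(w,v)=\Psi(v,w)^{-1}$ then yields $\frac{d}{dv}\Psi(u,v)=-\Psi(u,v)A(v)$. Alternatively one may differentiate $Z(v,u-v)$ in $v$ directly, which produces both a $\partial_1$ and a $\partial_2$ term; there one appeals to Lemma \ref{147} (valid precisely because the hypothesis forces $r\mapsto Z(r,t)$ to be derivable as well), and the two $\partial_2$ contributions cancel, leaving $-Z(v,u-v)[\partial_2 Z(v,0)]=-\Psi(u,v)A(v)$. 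Finally, iv) follows by applying ii) pointwise: $\frac{d}{du}\psi_{v,\xi}(u)=\big[\tfrac{d\Psi}{du}\big](\xi)=[A(u)\Psi(u,v)](\xi)=A(u)\psi_{v,\xi}(u)$.

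I do not expect a genuine obstacle: every step is a bookkeeping application of the groupoid identity or of Lemmas \ref{165} and \ref{147}. The one place that demands care is the chain rule for the two substitutions $(u,v)\mapsto(v,u-v)$, keeping straight the positions of the two arguments of $Z$ and the sign coming from the $u-v$ term, and checking before each differentiation that the relevant lemma's hypothesis is met (derivability in $t$ for all $r$, which by Lemma \ref{147} also secures derivability in $r$ needed for iii)).
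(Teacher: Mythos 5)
Your parts i), ii) and iv) match the paper's proof step for step: the cocycle identity is obtained from the cotranslation relation with $g=w$, $h=v-w$, $k=u-v$, and ii) is exactly the substitution $r=v$, $t=u-v$ into the identity of Lemma \ref{165}. Where you diverge is iii). The paper first factors $\Psi(u,v)=Z(0,u)Z(v,-v)=Z(0,u)Z^{\inv}(0,v)$, so that the $v$-dependence sits entirely inside an inverse, and then applies Lemma \ref{171}; you instead differentiate the cocycle identity $\Psi(u,v)\Psi(v,w)=\Psi(u,w)$ in $v$. As written, your product-rule step is mildly circular: it presupposes that $v\mapsto\Psi(u,v)$ is differentiable, which is part of what iii) asserts. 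The repair is to write $\Psi(u,v)=\Psi(u,w)\Psi(v,w)^{-1}$ and invoke the differentiability of the inverse (the content of Lemmas \ref{153} and \ref{171}), at which point you have essentially reproduced the paper's argument with $w=0$, so the two routes buy the same thing. The step to be genuinely wary of is your ``alternative'' derivation of iii): the chain rule for $v\mapsto Z(v,u-v)$, in which both slots move, requires $Z$ to be differentiable as a two-variable map, and the paper explicitly notes that separate derivability in each variable does not yield this without the extra hypothesis that $(r,t)\mapsto\partial_2Z(r,t)$ be continuous. That computation lands on the correct formula but is not justified under the stated hypotheses; the factorization-plus-inverse route (or your cocycle route, once the differentiability of $v\mapsto\Psi(u,v)$ is secured first) is the one that actually closes the argument.
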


\begin{proof}
It is easy to see that each $A(u)$ is a linear transformation of $X$ (however, we cannot ensure in general that it is continuous). Similarly, we cannot in general state that $u\mapsto A(u)$ is continuous (even when $\mathbb{L}_X$ has a topology that extends that of $\mathbb{B}_X$). To verify {\it i)} it is enough to see:
$$\Psi(u,v)\Psi(v,w)=Z(v,u-v)Z(w,v-w)=Z\left(w,(v-w)+(u-v)\right)=Z(w,u-w)=\Psi(u,w).$$

On the other hand, on Lemma \ref{165} we proved the identity 
$$\partial_2Z(r,t)=\Big[\partial_2Z(r+t,0)\Big]Z(r,t),$$
hence
\begin{equation*}
 \frac{d\Psi}{du}(u,v)=\frac{d}{du}\left[Z(v,u-v)\right]=\partial_2Z(v,u-v)=\Big[\partial_2Z(v+u-v,0)\Big]Z(v,u-v)=A(u)\Psi(u,v),
\end{equation*}
from where {\it ii)} follows. Then, trivially $\psi_{v,\xi}$ is a solution to (\ref{170}) and
$$\psi_{v,\xi}(v)=\left[\Psi(v,v)\right](\xi)=\left[Z(v,v-v)\right](\xi)=\Id_X(\xi)=\xi,$$ 
thus verifying {\it iv)}. Finally, note that
$$\Psi(u,v)=Z(v,u-v)=Z(0,u)Z(v,-v)=Z(0,u)Z^\inv(0,v),$$
hence, using the identity from Lemma \ref{171} we obtain:
\begin{align*}
    \frac{d\Psi}{dv}(u,v)=Z(0,u)\frac{d}{dv}\left[Z^\inv(0,v)\right]&=Z(0,u)\Big[\partial_2Z^\inv(0,v)\Big]\\
    &=-Z(0,u)Z^\inv(0,v)\Big[\partial_2Z(0,v)\Big]Z^\inv(0,v)\\
    &=-\Psi(u,v)\Big[\partial_2Z(v,0)\Big]\\
    &=-\Psi(u,v)A(v),
\end{align*}
where the second to last equality follows from the identify of Lemma \ref{165}, thus proving {\it iii)}.
\end{proof}
 
The existence of a function with the properties of $\Psi$ on the previous proposition is exactly what is needed to describe globally defined solutions, for every initial condition, to a linear differential equation. It is easy to see that they are  generalization of the well known transition matrices on finite dimension. To formalize this notion we state the following definition, adapted from \cite[Definition 5.1.3]{Pazy}.

\begin{definition}
    A map $\Psi:\mathbb{K}\rimes\mathbb
    {K}\to \mathbb{B}_X$ is an \textbf{evolution operator} (or evolution system) if the following conditions are verified
    \begin{itemize}
        \item [i)] $\Psi(r,r)=\Id$, $\Psi(r,t)\Psi(t,s)=\Psi(r,s)$ for every $r,s,t\in \mathbb{K}$,
        \item [ii)] $(r,t)\to \Psi(r,t)\in\mathbb{B}_X$ is strongly continuous, i.e., $(r,t)\to \left[\Psi(r,t)\right](x)\in X$ is continuous for every $x\in X$.
    \end{itemize}

    If furthermore, there is a map $t\mapsto A(t)\in\mathbb{L}_X$ such that
    \begin{itemize}
        \item [iii)] $\partial_1\Psi(r,t)=A(r)\Psi(r,t)$ y $\partial_2\Psi(r,t)=-\Psi(r,t)A(t)$, 
    \end{itemize}
    we say that $\Psi$ is the \textbf{evolution operator associated} to the differential equation $\dot{x}=A(t)x(t)$.
\end{definition}

On \cite[Theorem 4.1.3]{Pazy} it is proved that a real autonomous linear differential equation $\dot{x}=Ax$ on a Banach space has a solution defined on $[0,\infty)$ and uniquely defined for every initial condition on $X$ if and only if $A$ is a linear operator, defined on a dense subspace of $X$ which is obtained as the strong derivative at the origin of a semigroup morphism $T:[0,\infty)\to\mathbb{B}_X$. Inspired by this, we present the following definition and theorem.

\begin{definition}
    For a differentiable cotranslation $Z:\mathbb{K}\rimes\mathbb{K}\to \mathbb{A}_X$, its \textbf{infinitesimal generator} is the function $A:\mathbb{K}\to \mathbb{L}_X$ given by
    $A(t)=\partial_2Z(t,0)$. 
\end{definition}

Note that the infinitesimal generator of a cotranslation corresponds to the derivative respect to the second coordinate evaluated on the unit space $\left(\mathbb{K}\rimes\mathbb{K}\right)^{(0)}$ of the groupoid.

\begin{theorem}
    A linear nonautonomous differential equation on a Banach space $X$
    \begin{equation}\label{179}
        \dot{x}(t)=A(t)x(t),
    \end{equation}
    has an evolution operator associated to it (i.e. unique and globally defined solution for every initial condition on $X$) if and only if $A$ is the infinitesimal generator of a differentiable cotranslation $Z:\mathbb{K}\rimes \mathbb{K}\to \mathbb{A}_X$.
\end{theorem}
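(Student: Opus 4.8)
The plan is to exhibit an explicit bijection between evolution operators and differentiable cotranslations, realised by the mutually inverse assignments
$$\Psi(u,v):=Z(v,u-v),\qquad Z(r,t):=\Psi(t+r,r),$$
and to verify that under this correspondence the coefficient $A$ of \eqref{179} is simultaneously the datum attached to the evolution operator and the infinitesimal generator $\partial_2Z(\,\cdot\,,0)$ of the cotranslation. Everything then reduces to algebraic manipulation of the two defining identities (the cocycle relation for $\Psi$ and the groupoid relation for $Z$) together with the differentiation rules already in hand. A structural point worth recording at the outset is that $\Psi$ is defined on all of $\mathbb{K}^2$ precisely when $Z$ is defined on the whole groupoid $\mathbb{K}\rimes\mathbb{K}$, so two-sided global existence of solutions is exactly what matches a genuine (total) cotranslation.

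For the direction assuming $A$ is the infinitesimal generator of a differentiable cotranslation $Z$, I would invoke Proposition \ref{178} directly. With $\Psi(u,v)=Z(v,u-v)$, part i) is the cocycle identity $\Psi(u,v)\Psi(v,w)=\Psi(u,w)$, while $\Psi(u,u)=Z(u,0)=\Id$ records the value of a cotranslation at the group unit $0$; parts ii) and iii) supply $\partial_1\Psi=A(u)\Psi$ and $\partial_2\Psi=-\Psi A(v)$; and strong continuity of $\Psi$ is inherited from continuity of $Z$ through continuity of evaluation. Hence $\Psi$ is the evolution operator associated to \eqref{179}, and part iv) exhibits the unique globally defined solution $\psi_{v,\xi}(u)=\left[\Psi(u,v)\right](\xi)$ for each initial condition $\xi\in X$.

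For the converse, suppose \eqref{179} admits an associated evolution operator $\Psi$ and set $Z(r,t):=\Psi(t+r,r)$. First, $\Psi(u,v)\Psi(v,u)=\Psi(u,u)=\Id$ shows each $\Psi(u,v)$ is invertible, so $Z$ takes values in $\mathbb{A}_X$; strong continuity of $Z$ follows from that of $\Psi$. The groupoid (cotranslation) identity is immediate from the cocycle property, since $\mathbb{K}$ is additive:
$$Z(s+r,t)Z(r,s)=\Psi(t+s+r,s+r)\Psi(s+r,r)=\Psi(t+s+r,r)=Z(r,t+s).$$
For differentiability, the chain rule and axiom iii) of the evolution operator give
$$\partial_2Z(r,t)=\partial_1\Psi(t+r,r)=A(t+r)\Psi(t+r,r)=A(t+r)Z(r,t),$$
so $t\mapsto Z(r,t)$ is derivable (derivability in $r$ follows likewise by combining $\partial_1\Psi$ and $\partial_2\Psi$), and $Z$ is a differentiable cotranslation. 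Evaluating at $t=0$ yields $\partial_2Z(t,0)=A(t)\Psi(t,t)=A(t)$, so the infinitesimal generator of $Z$ is exactly $A$. Finally I would note that the two assignments are mutually inverse, $\Psi(t+r,r)=Z(r,t)$ and $Z(v,u-v)=\Psi(u,v)$, which makes the correspondence bijective and closes the equivalence.

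The point requiring genuine care is analytic rather than algebraic: the generator $A(t)=\partial_2Z(t,0)$ lands in $\mathbb{L}_X$ and need not be continuous, so the identities $\partial_1\Psi=A(u)\Psi$ and $\partial_2Z(r,t)=A(t+r)Z(r,t)$ must be read in the strong sense, valid only where the relevant vectors lie in the domain of $A$. This is already built into the definition of evolution operator and into Proposition \ref{178}, so in the proof it suffices to check that the change of variables $\Psi(u,v)\leftrightarrow Z(v,u-v)$ respects these domains; no new estimate is needed, which is exactly what makes the equivalence clean once Proposition \ref{178} is available.
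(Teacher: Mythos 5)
Your proposal is correct and follows exactly the paper's route: the forward direction is delegated to Proposition \ref{178} via $\Psi(u,v)=Z(v,u-v)$, and the converse defines $Z(r,t)=\Psi(t+r,r)$ and checks it is a differentiable cotranslation with generator $A$. You merely spell out the verifications (invertibility of $\Psi(u,v)$, the cocycle-to-groupoid identity, and $\partial_2Z(t,0)=A(t)$) that the paper compresses into ``it is easy to see.''
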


\begin{proof}
    Proposition \ref{178} states that if $A(t)=\partial_2Z(t,0)$ for some cotranslation, then there is an evolution operator associated to (\ref{179}). 
    
    \smallskip
    On the other hand, if (\ref{179}) has an evolution operator associated, say $\Psi$, then set $Z:\mathbb{K}\rimes\mathbb{K}\to\mathbb{A}_X$  by $Z(r,t)=\Psi(t+r,r)$. It is easy to see that such $Z$ is a differentiable groupoid morphism and furthermore, by Lemma \ref{165}, we conclude $A(t)=\partial_2Z(t,0)$. 
\end{proof}

\section{Partial cotranslations}
In this section we consider a partial version of cotranslations. We fix a group $G$ and consider in $G\rimes G$ the left translations groupoid structure. When we consider a topology in $G$, we assume it to be Hausdorff and locally compact and in $G\rimes G$ we consider the product topology. We also consider a Banach space $X$ over the field $\mathbb{K}$. Some of the results we present in this section apply for general Banach spaces, but the main conclusions need $X$ to be finite dimensional. Moreover, at the end of the section we need the notion of orthonormal basis, thus $X$ will need to be also a Hilbert space. In other words, our main results hold when we consider the space $X=\mathbb{K}^d$, with its Euclidean norm. In this case we obtain $\mathbb{B}_X=\mathcal{M}_d(\mathbb{K})$, the space of square $d\rimes d$ matrices, and $\mathbb{A}_X=GL_d(\mathbb{K})$ the general linear group of dimension $d$.

\begin{definition} 
    \begin{itemize}
        \item [a)] We say a function  $W:G\times G\to \mathbb{B}_X$ is a \textbf{partial cotranslation}  if
        $$W(g,kh)=W(hg,k)\circ  W(g,h),\quad\text{ for all } g,h,k\in G.$$

        \item [b)] For two partial cotranslations $W,V:G\rimes G\to\mathbb{B}_x$, we say they are \textbf{mutually orthogonal} if $W(hg,k)V(g,h)=V(hg,k)W(g,h)=0$ for all $g,h,k\in G$.
    \end{itemize}
\end{definition}

Unlike a cotranslation, a partial cotranslation is not necessarily a groupoid morphism, since its codomain is not a groupoid (at least not if we define $\mathbb{B}_X^{(2)}=\mathbb{B}_X^2$).

\begin{lemma}\label{331}
    If $W:G\rimes G\to \mathbb{B}_X$ is a partial cotranslation, then $W(g,e)$ is idempotent for all $g\in G$.
\end{lemma}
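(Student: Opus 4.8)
The plan is to extract idempotency directly from the single defining relation of a partial cotranslation by specializing its three free variables. Recall that $W$ satisfies
$$W(g,kh)=W(hg,k)\circ W(g,h),\qquad\text{for all }g,h,k\in G.$$
The key observation is that the right-hand side is already a composition of two values of $W$, so the whole task reduces to choosing $h$ and $k$ so that both factors collapse to $W(g,e)$ while the left-hand side also reduces to $W(g,e)$.

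First I would substitute $h=e$. Since $e$ is the group unit, this gives $hg=g$ and $kh=k$, so the identity becomes $W(g,k)=W(g,k)\circ W(g,e)$ for every $k\in G$. Next I would further set $k=e$, using $e\cdot e=e$, to obtain
$$W(g,e)=W(g,e)\circ W(g,e),$$
which is precisely the statement that $W(g,e)$ is idempotent. As $g\in G$ was arbitrary, the conclusion holds for all $g$.

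I expect no genuine obstacle: the result follows immediately from the defining identity under the substitution $k=h=e$. The only point requiring (trivial) care is the bookkeeping with the group unit, namely that $eg=g$ and $e\cdot e=e$, so that the specialized instance of the composition law indeed reads $W(g,e)=W(g,e)\circ W(g,e)$.
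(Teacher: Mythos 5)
Your proof is correct and follows essentially the same route as the paper, which also specializes the defining relation at $h=k=e$ to get $W(g,e)W(g,e)=W(g,e\cdot e)=W(g,e)$; your two-step substitution is just a slightly more leisurely version of the same one-line computation.
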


\begin{proof}
  By definition, we have $W(g,e)W(g,e)=W(g,e\cdot e)=W(g,e)$ for all $g\in G$.
\end{proof}

\begin{lemma}\label{352}
    Let $W:G\rimes G\to \mathbb{B}_X$ be a partial cotranslation. For all $g,h,k\in G$ one gets $\ker W(h,g)=\ker W(h,k)$. In particular, if $X$ is finite dimensional, then $\rank W(h,g)=\rank W(h,k)$.
\end{lemma}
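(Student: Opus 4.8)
The plan is to fix the first coordinate $h$ and show that the subspace $\ker W(h,g)$ does not depend on the second coordinate $g$; concretely, I would prove $\ker W(h,g)=\ker W(h,e)$ for every $g\in G$, after which the claimed equality $\ker W(h,g)=\ker W(h,k)$ is immediate by transitivity. The whole argument rests on specializing the defining identity $W(a,cb)=W(ba,c)\circ W(a,b)$ (valid for all $a,b,c\in G$) in two complementary ways, using the distinguished element $W(h,e)$—idempotent by Lemma \ref{331}—as a pivot.

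For the inclusion $\ker W(h,e)\subseteq\ker W(h,g)$, I would put $b=e$ and $a=h$, $c=g$ in the defining identity, obtaining $W(h,g)=W(h,g)\circ W(h,e)$. Reading this as an equality of operators, any $x$ with $W(h,e)x=0$ satisfies $W(h,g)x=W(h,g)\big(W(h,e)x\big)=0$, so $x\in\ker W(h,g)$.

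For the reverse inclusion $\ker W(h,g)\subseteq\ker W(h,e)$, I would exploit that $G$ is a group in order to invert in the second slot: taking $a=h$, $b=g$ and $c=g^{-1}$ forces $cb=e$ and yields $W(h,e)=W(gh,g^{-1})\circ W(h,g)$. Hence $W(h,g)x=0$ implies $W(h,e)x=W(gh,g^{-1})\big(W(h,g)x\big)=0$, giving $x\in\ker W(h,e)$. Combining the two inclusions gives $\ker W(h,g)=\ker W(h,e)$ for every $g$, and therefore $\ker W(h,g)=\ker W(h,k)$ for all $g,h,k\in G$. The finite-dimensional rank statement then follows at once from the rank--nullity theorem, since $\rank W(h,g)=\dim X-\dim\ker W(h,g)$ depends on $g$ only through the (now constant) kernel.

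I expect the only real obstacle to be spotting the correct substitution for the second inclusion: one must recognize that the group inverse $g^{-1}$ in the second coordinate lets the defining relation express $W(h,e)$ as a left multiple of $W(h,g)$, which is precisely what transfers kernel membership in the harder direction. The first inclusion and the rank bookkeeping are routine.
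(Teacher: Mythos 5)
Your proof is correct and takes essentially the same approach as the paper's: both arguments specialize the defining identity so as to write one operator as a left multiple of the other, which transfers kernel membership. The paper does this in a single step via the identity $W(h,k)=W(gh,kg^{-1})W(h,g)$ (plus symmetry in $g$ and $k$) rather than pivoting through $W(h,e)$, but that difference is immaterial.
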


\begin{proof}
    From the identity
    $$W(h,k)=W(gh,kg^{-1})W(h,g)$$
    one gets $\xi\in \ker W(h,g)\Rightarrow \xi\in \ker W(h,k)$, thus $\ker W(h,g)\leq \ker W(h,k)$ and by symmetry of the argument we obtain the equality. The second conclusion is trivial if $X$ is finite dimensional.
\end{proof}

A basic fact of linear algebra we need is that for two transformations $A,B\in \mathbb{B}_X$, if $AB$ has the same kernel as $B$, then $\dim \ker A\leq \dim\ker B$.

\begin{proposition}\label{366}
      Let $W:G\rimes G\to \mathbb{B}_X$ be a partial cotranslation, where $X$ is finite dimensional. Then $\rank W(g,h)=\rank W(k,\ell)$ for every $g,h,k,\ell\in G$. 
\end{proposition}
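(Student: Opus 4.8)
The plan is to show that the common value $\rank W(g,h)$, which by Lemma \ref{352} already does not depend on the second coordinate, is in fact also independent of the first. Accordingly, I would first set $\rho(g):=\rank W(g,h)$; this is well defined by Lemma \ref{352} (for fixed first coordinate the kernel, hence the rank in finite dimension, is the same for every second coordinate), and the goal becomes showing that $\rho$ is constant on $G$.

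The key step is a one-sided monotonicity estimate extracted directly from the composition law. Taking $k=e$ in the defining identity gives $W(g,h)=W(hg,e)\circ W(g,h)$ for all $g,h\in G$. Writing $A=W(hg,e)$ and $B=W(g,h)$, this says $AB=B$, so $AB$ and $B$ trivially share the same kernel; the elementary fact recorded just before the proposition then yields $\dim\ker A\le \dim\ker B$, that is $\dim\ker W(hg,e)\le \dim\ker W(g,h)$. Since $X$ is finite dimensional I can pass to ranks (using Lemma \ref{352} once more to discard the second coordinates), and this reads $\rho(hg)\ge\rho(g)$ for all $g,h\in G$.

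It then remains to upgrade this inequality to an equality, and here the group structure does all the work: for arbitrary $g,g'\in G$ I would choose $h=g'g^{-1}$, so that $hg=g'$ and the estimate gives $\rho(g)\le\rho(g')$; exchanging the roles of $g$ and $g'$ gives the reverse inequality, whence $\rho(g)=\rho(g')$. Thus $\rho$ is constant, and combined with Lemma \ref{352} this delivers $\rank W(g,h)=\rank W(k,\ell)$ for all $g,h,k,\ell\in G$.

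I do not expect a genuine obstacle here: once one notices that Lemma \ref{352} reduces the problem to the first coordinate alone, the only point requiring care is that the composition law produces a priori only a one-sided bound, and it is the transitivity of left translation (the surjectivity of $h\mapsto hg$ for fixed $g$) that promotes it to the full invariance. An alternative to invoking the kernel fact would be to apply $\im\!\big(W(hg,k)\circ W(g,h)\big)\subseteq \im W(hg,k)$ directly to $W(g,kh)=W(hg,k)\circ W(g,h)$ to obtain $\rho(g)\le\rho(hg)$ at once; I would nonetheless keep the kernel argument so as to use the linear-algebra fact stated immediately above the proposition.
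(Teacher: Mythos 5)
Your proof is correct and follows essentially the same strategy as the paper: combine the composition law with Lemma \ref{352} and the stated kernel-dimension fact to control the rank as the first coordinate varies. The only (harmless) difference is organizational: the paper derives the two-sided identity $\rank W(hg,h^{-1})=\rank W(g,h)$ from the pair of identities $W(hg,h^{-1})W(g,h)=W(g,e)$ and $W(g,h)W(hg,h^{-1})=W(hg,e)$ and then substitutes $h=kg^{-1}$, whereas you extract the single one-sided inequality $\rank W(hg,e)\ge\rank W(g,h)$ from the $k=e$ instance and symmetrize it using the transitivity of left translation.
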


\begin{proof}
    Starting from the identity
    $$W(hg,h^{-1})W(g,h)=W(g,e),$$
    as Lemma \ref{352} indicates $W(g,h)$ and $W(g,e)$ have the same kernel, then the previous observation implies $\dim\ker W(hg,h^{-1})\leq \dim\ker W(g,h)$. 

\smallskip
    Now, from the identity
    $$W(g,h)W(hg,h^{-1})=W(hg,e),$$
    as Lemma \ref{352} showed that $W(hg,h^{-1})$ and $W(hg,e)$ have the same kernel, then $\dim\ker W(g,h)\leq \dim \ker W(hg,h^{-1})$. In conclusion, $\dim\ker W(hg,h^{-1})=\dim\ker W(g,h)$, thus $\rank W(hg,h^{-1})=\rank W(g,h)$.

    \smallskip
    Taking $h=kg^{-1}$ in the last equality we obtain $\rank W(k,gk^{-1})=\rank W(g,kg^{-1})$, but for arbitrary $h,\ell\in G$ we know, from Lemma \ref{352} that $\rank W(g,kg^{-1})=\rank W(g,h)$ and $\rank W(k,gk^{-1})=\rank W(k,\ell)$, thus $\rank W(g,h)=\rank W(k,\ell)$.
\end{proof}

In view of the above, we can define without ambiguity:

\begin{definition}\label{365}
     Let $X$ be finite dimensional. For a partial cotranslation $W:G\rimes G\to\mathbb{B}_X$, the \textbf{rank} of the partial cotranslation is $\rank \,W:=\rank\, W(g,h)$ for arbitrary $g,h\in G$.
\end{definition}

As every cotranslation is in particular a partial cotranslation, the following definition applies as well for cotranslations.

\begin{definition}\label{158}
    Given a partial cotranslation $V:G\rimes G\to \mathbb{B}_X$, we say a function $\P:G\to \mathbb{B}_X$ is:
    \begin{itemize}
        \item [a)] \textbf{projector} if $\P(g)$ is idempotent on $\mathbb{B}_X$ for every $g\in G$.
        \item [b)] \textbf{invariant projector} associated to $V$ if it is a projector and
        $$\P(hg)V(g,h)=V(g,h)\P(g),\qquad\forall\,g,h\in G.$$
        \item [c)] For other invariant projectors $\mathrm{Q}:G\to \mathbb{B}_X$, we say $\P$ and $\mathrm{Q}$ are \textbf{mutually orthogonal} if $\P(g)\mathrm{Q}(g)=\mathrm{Q}(g)\P(g)=0$ for every $g\in G$.
    \end{itemize}
\end{definition}

It is easy to see that if $\P$ is an invariant projector for a partial cotranslation, then $\Id-\P$ is invariant as well, and they are mutually orthogonal.

\begin{lemma}
    If $W:G\rimes G\to \mathbb{B}_X$ is a partial cotranslation and we define $\P:G\to \mathbb{B}_X$ by $\P(g)=W(g,e)$, called its \textbf{projector of the units space}, we obtain that $\P$ is an invariant projector associated to $W$. If $W$ is continuous, then its projector of the units space is continuous as well.
\end{lemma}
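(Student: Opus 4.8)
The plan is to verify the two defining properties of an invariant projector from Definition \ref{158} and then address continuity; I expect the argument to be entirely formal, driven by specializations of the cocycle identity. Idempotency of $\P$ is immediate: since $\P(g)=W(g,e)$, Lemma \ref{331} already tells us that $W(g,e)$ is idempotent for every $g\in G$, so $\P$ is a projector in the sense of part (a) of Definition \ref{158}.

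For the invariance condition $\P(hg)W(g,h)=W(g,h)\P(g)$, the idea is to extract two one-sided identities from the defining relation $W(g,kh)=W(hg,k)\,W(g,h)$ by specializing its third variable. First I would set $k=e$, which gives $W(g,h)=W(hg,e)\,W(g,h)$, that is $\P(hg)\,W(g,h)=W(g,h)$. Then I would specialize the \emph{inner} variable to $e$ instead (replacing the second factor's argument $h$ by $e$ and renaming $k$ as $h$), obtaining $W(g,h)=W(g,h)\,W(g,e)$, that is $W(g,h)\,\P(g)=W(g,h)$. Since both $\P(hg)\,W(g,h)$ and $W(g,h)\,\P(g)$ equal $W(g,h)$, they coincide, which is exactly the invariance relation of part (b), so $\P$ is an invariant projector associated to $W$.

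Finally, continuity of $\P$ follows formally: the map $g\mapsto(g,e)$ is continuous, being the product of the identity of $G$ with the constant map at $e$, so if $W$ is continuous then $\P=W(\cdot,e)$ is a composition of continuous maps and hence continuous.

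I do not anticipate a genuine obstacle here; the only point requiring care is the bookkeeping of which variable is specialized in the cocycle identity, since the two one-sided identities arise from the \emph{same} relation but under different substitutions ($k=e$ versus setting the inner argument to $e$). Getting those two substitutions straight is the whole content of the invariance step.
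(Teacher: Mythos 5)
Your proposal is correct and follows essentially the same route as the paper: the paper's proof also cites Lemma \ref{331} for idempotency and establishes invariance via the single chain $W(g,h)\P(g)=W(g,h)W(g,e)=W(g,h)=W(hg,e)W(g,h)=\P(hg)W(g,h)$, which is exactly your two specializations of the cocycle identity combined into one line. The continuity step is likewise handled as an immediate composition of continuous maps.
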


\begin{proof}
    We know from Lemma \ref{331} that it is indeed a projector. To obtain its invariance it is enough to see
    \begin{align*}
        W(g,h)\P(g)=W(g,h)W(g,e)=W(g,h)=W(hg,e)W(g,h)=\P(hg)W(g,h).
    \end{align*}

The last statement is trivial.
\end{proof}

\begin{lemma}\label{188}
 Let $X$ be finite dimensional.  If $\P$ is an invariant projector associated to a (non-partial) cotranslation $Z:G\rimes G\to \mathbb{A}_X$, then $\rank\,\P(g)$ is a constant (for all $g\in G$). Moreover, if $Z$ is continuous, then $g\mapsto \P(g)$ is continuous as well.
\end{lemma}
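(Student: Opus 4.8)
The plan is to establish the two assertions separately, beginning with the constancy of the rank of $\P(g)$. The key observation is that an invariant projector for a genuine cotranslation interacts with the invertibility of $Z$ in a way that partial cotranslations do not enjoy. Starting from the invariance relation $\P(hg)Z(g,h)=Z(g,h)\P(g)$ and using that $Z(g,h)\in\mathbb{A}_X$ is invertible, I would rewrite this as
$$\P(hg)=Z(g,h)\,\P(g)\,\big[Z(g,h)\big]^{-1}=Z(g,h)\,\P(g)\,Z^\inv(g,h).$$
This exhibits $\P(hg)$ as conjugate to $\P(g)$ by an invertible transformation. Since conjugation by an invertible operator is a similarity, it preserves rank, so $\rank\,\P(hg)=\rank\,\P(g)$ for all $g,h\in G$. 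As $h$ ranges over $G$, the element $hg$ ranges over all of $G$ (left multiplication by $G$ is transitive), so $\rank\,\P(\cdot)$ takes a single value; explicitly, for arbitrary $k,g\in G$ one chooses $h=kg^{-1}$ to get $\rank\,\P(k)=\rank\,\P(g)$. This is where finite dimensionality of $X$ is used, guaranteeing that rank is a well-defined integer and that similarity preserves it.

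For the second assertion, assume $Z$ is continuous, hence so is $Z^\inv$ (inversion is continuous on $\mathbb{A}_X\cong GL_d(\mathbb{K})$ in the finite-dimensional setting, being given by rational functions of the entries via Cramer's rule). The relation
$$\P(g)=Z\big(g^{-1}\!,\text{ suitable }h\big)\cdots$$
is not quite available directly, so instead I would fix a base point, say the unit $e\in G$, and express $\P(g)$ in terms of $\P(e)$. Setting $h=g^{-1}$ and relabeling in the invariance identity, or more cleanly using the conjugation formula above with the substitution that moves the argument to $e$, one writes $\P(g)$ as a product of the form $Z(\cdot,\cdot)\,\P(e)\,Z^\inv(\cdot,\cdot)$ where the entries of $Z$ and $Z^\inv$ depend continuously on $g$. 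Since matrix multiplication is continuous and the factors are continuous functions of $g$, the composite $g\mapsto\P(g)$ is continuous.

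The main obstacle is purely bookkeeping: arranging the substitution in the invariance relation so that $\P(g)$ is genuinely written as a conjugate of a fixed matrix (like $\P(e)$) by operators that are continuous in $g$, rather than merely relating $\P(g)$ and $\P(hg)$ for pairs. Concretely, from $\P(hg)=Z(g,h)\P(g)Z^\inv(g,h)$ one wants to choose the free variable to pin one side to the base point; taking $g=e$ yields $\P(h)=Z(e,h)\P(e)Z^\inv(e,h)$, and since $h\mapsto Z(e,h)$ is continuous and $Z^\inv(e,h)=Z(h,h^{-1})$ is continuous as a composition of continuous maps, both the rank-constancy (rank of a conjugate of the fixed operator $\P(e)$) and the continuity of $g\mapsto\P(g)$ follow at once from this single clean formula. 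I expect no genuine analytic difficulty beyond verifying that inversion is continuous, which is standard in finite dimension.
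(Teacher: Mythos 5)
Your proposal is correct and arrives at exactly the identity the paper uses: setting one argument to pin the base point at $e$ yields $\P(g)=Z(e,g)\,\P(e)\,Z(g,g^{-1})$, from which constancy of rank and continuity both follow, just as in the paper's proof (which obtains the same formula by substituting $h=g^{-1}$ in the invariance relation). The only cosmetic difference is that you derive $Z^\inv(e,h)=Z(h,h^{-1})$ via the involution lemma where the paper writes $Z^\inv(g,g^{-1})=Z(e,g)$; the argument is otherwise the same.
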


\begin{proof}
By definition we have
$$\P(e)Z(g,g^{-1})=Z(g,g^{-1})\P(g),$$
thus $\P(g)=Z^\inv (g,g^{-1})\P(e)Z(g,g^{-1})=Z(e,g)\P(e)Z(g,g^{-1})$, which shows that $\rank \P(g)=\rank\P(e)$ for all $g\in G$. If $Z$ is continuous, this identity also shows $\P$ is continuous.
\end{proof}

The previous result is not true for partial cotranslations. Consider for instance $X=\mathbb{R}^2$, $G=\mathbb{Z}$, and the partial cotranslation given by $W(m,n)=\begin{pmatrix}
0&0\\
0&2^n
\end{pmatrix}$. The projector given by
$\P(n)=\begin{pmatrix}
0&0\\
0&1
\end{pmatrix}$ for even $n$ and $\P(n)=\Id$ for odd $n$, is clearly invariant, but its rank is not constant. 

\smallskip

Nevertheless, the projector of the units space of a partial cotranslation does indeed have constant rank.

\begin{proposition}\label{172}
    If $V:G\times G\to \mathbb{B}_X$ is a partial cotranslation and $\P$ is an invariant associated projector, then $W:G\rimes G\to \mathbb{B}_X$ given by $W(g,h)=V(g,h)\P(g)$ is a partial cotranslation.
    
    \smallskip
    If moreover both $\P$ and $V$ are continuous, then $W$ is continuous as well.
\end{proposition}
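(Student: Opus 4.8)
The plan is to verify the partial cotranslation identity directly by substituting the definition $W(g,h)=V(g,h)\P(g)$ and manipulating with the two structural facts available: the fact that $V$ is itself a partial cotranslation, i.e. $V(g,kh)=V(hg,k)V(g,h)$, and the invariance of $\P$, which in the notation of Definition \ref{158} reads $\P(hg)V(g,h)=V(g,h)\P(g)$. First I would write out both sides of the desired equation $W(g,kh)=W(hg,k)\,W(g,h)$ in terms of $V$ and $\P$, so that the left-hand side becomes $V(g,kh)\P(g)$ and the right-hand side becomes $V(hg,k)\P(hg)V(g,h)\P(g)$.

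The key step is then to simplify the right-hand side. Applying the invariance relation $\P(hg)V(g,h)=V(g,h)\P(g)$ to the middle factor, the right-hand side becomes
\begin{align*}
W(hg,k)\,W(g,h)=V(hg,k)\,\P(hg)\,V(g,h)\,\P(g)=V(hg,k)\,V(g,h)\,\P(g)\,\P(g).
\end{align*}
Since $\P$ is a projector, $\P(g)$ is idempotent, so $\P(g)\P(g)=\P(g)$, and the partial cotranslation identity for $V$ gives $V(hg,k)V(g,h)=V(g,kh)$. Substituting both of these yields
\begin{align*}
V(hg,k)\,V(g,h)\,\P(g)=V(g,kh)\,\P(g)=W(g,kh),
\end{align*}
which is exactly the left-hand side, establishing that $W$ is a partial cotranslation.

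For the final continuity claim, I would observe that $W$ is obtained from $V$ and $\P$ by composition with multiplication of operators: if $V:G\rimes G\to \mathbb{B}_X$ and $\P:G\to\mathbb{B}_X$ are continuous, then the map $(g,h)\mapsto V(g,h)\P(g)$ is continuous because it factors through the continuous multiplication map $\mathbb{B}_X\times\mathbb{B}_X\to\mathbb{B}_X$ composed with the continuous map $(g,h)\mapsto(V(g,h),\P(g))$. This is routine and requires no further structure.

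I do not anticipate a genuine obstacle here, since the statement is essentially an algebraic identity; the only subtlety is bookkeeping of the group arguments, namely making sure the invariance relation is applied with the correct substitution (with $hg$ and $g$ playing the roles that match the indices appearing in $W(hg,k)$ and $W(g,h)$). The potential pitfall is confusing the order of multiplication, since $\mathbb{B}_X$ is noncommutative, so I would be careful to keep $\P(g)$ on the right throughout and to verify that the idempotency is applied to the repeated factor $\P(g)$ rather than across different group elements.
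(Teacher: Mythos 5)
Your proof is correct and uses exactly the same ingredients as the paper's: the idempotency $\P(g)^2=\P(g)$, the invariance relation $\P(hg)V(g,h)=V(g,h)\P(g)$, and the cotranslation identity for $V$; the only cosmetic difference is that you run the chain of equalities from $W(hg,k)W(g,h)$ back to $W(g,kh)$ while the paper goes in the other direction. The continuity remark is likewise handled the same way.
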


\begin{proof}
It is enough to see that
    \begin{eqnarray*}
        W(g,kh)=V(g,kh)\P(g)&=&V(g,kh)\P(g)^2\\
    &=&V(hg,k)V(g,h)\P(g)^2\\
    &=&V(hg,k)\P(hg)V(g,h)\P(g)\\
    &=&W(hg,k)W(g,h).
    \end{eqnarray*}

    Finally, continuity follows trivially.
\end{proof}

\begin{example}\label{417}
    {\rm Consider the linear nonautonomous  differential equation $\dot{x}=A(t)x$ with transition matrix $\Phi$. We know from Example \ref{191} that $Z(s,t)=\Phi(t+s,s)$ defines a cotranslation with $X=\mathbb{R}^d$ and $G=\mathbb{R}$. Consider now an invariant projector $\P$ for this cotranslation (in the sense of Definition \ref{158}). By Lemma \ref{188}, $\P$ is continuous. Proposition \ref{172} shows that $W(s,t)=Z(s,t)\P(s)$ defines a continuous partial cotranslation. 
    
    \smallskip
Moreover, in this case $\P$ is also an invariant projector for this differential equation, in the usual sense for nonautonomous equations \cite[p. 246]{Siegmund}. Indeed:
    \begin{equation*}
        \P(t)\Phi(t,s)=\P(t)Z(s,t-s)=\P\left((t-s)+s\right)Z(s,t-s)=Z(s,t-s)\P(s)=\Phi(t,s)\P(s).
    \end{equation*}
    
Note that $W$ can also be written $W(s,t)=\Phi(t+s,s)\P(s)$, which is a matrix function that describes the solutions to the equation $\dot{x}=A(t)x$ whose graph lies in the linear integral manifold \cite[Definition 2.2]{Siegmund} associated to the image of this projector \cite[Lemma 2.1]{Siegmund}.}
\end{example}

\begin{lemma}\label{372}
    If $W,V:G\rimes G\to \mathbb{B}_X$ are mutually orthogonal partial cotranslations, then $W+V:G\rimes G\to \mathbb{B}_X$ is a partial cotranslation. If both $W$ and $V$ are continuous, then $W+V$ is as well. 
\end{lemma}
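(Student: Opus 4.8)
The plan is to verify the defining identity of a partial cotranslation for $W+V$ directly, exploiting the bilinearity of composition in $\mathbb{B}_X$ together with the mutual orthogonality hypothesis to discard the cross terms. The content is entirely formal: once the composition is expanded, orthogonality is exactly the condition that makes it telescope into the sum of the two separate identities.

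First I would fix $g,h,k\in G$ and expand the right-hand side of the identity we want. Since composition in $\mathbb{B}_X$ distributes over addition, one has
\begin{align*}
(W+V)(hg,k)\circ (W+V)(g,h)
&= W(hg,k)W(g,h) + W(hg,k)V(g,h)\\
&\quad + V(hg,k)W(g,h) + V(hg,k)V(g,h).
\end{align*}

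Next I would invoke the mutual orthogonality of $W$ and $V$, which by definition gives $W(hg,k)V(g,h)=0$ and $V(hg,k)W(g,h)=0$, annihilating the two cross terms and leaving $W(hg,k)W(g,h)+V(hg,k)V(g,h)$. Applying the partial cotranslation identity to each of $W$ and $V$ separately then collapses this to $W(g,kh)+V(g,kh)=(W+V)(g,kh)$, which is precisely the identity required. For the continuity claim I would note that $W+V$ is the composite of the continuous pairing $g,h\mapsto\big(W(g,h),V(g,h)\big)$ with the addition map $\mathbb{B}_X\times\mathbb{B}_X\to\mathbb{B}_X$, which is continuous because $\mathbb{B}_X$ is a topological vector space; hence continuity passes from $W$ and $V$ to $W+V$.

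I do not anticipate a genuine obstacle here: the only substantive move is the cancellation of cross terms, and the definition of mutual orthogonality is calibrated exactly so that this cancellation occurs. The subtlety worth flagging, if any, is simply that orthogonality must hold for all triples $(g,h,k)$ in the specific argument pattern appearing in the expansion, namely with $W(hg,k)$ against $V(g,h)$ and vice versa, which matches the composable-pair structure of the left translations groupoid; verifying that the hypothesis as stated supplies exactly these vanishing products is the one place to be careful before declaring the expansion complete.
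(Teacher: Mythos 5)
Your proposal is correct and follows essentially the same argument as the paper: expand the product $(W+V)(hg,k)\circ(W+V)(g,h)$ by bilinearity, cancel the cross terms via mutual orthogonality, and apply the partial cotranslation identity to $W$ and $V$ separately (the paper merely writes the same chain of equalities in the reverse direction, and likewise dismisses continuity as immediate).
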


\begin{proof}
Note that
        \begin{eqnarray*}
        W(g,kh)+V(g,kh)&=&W(hg,k)W(g,h)+V(hg,k)V(g,h)\\
    &=&W(hg,k)W(g,h)+W(hg,k)V(g,h)\\
    &&+V(hg,k)W(g,h)+V(hg,k)V(g,h)\\
    &=&\left(W(hg,k)+V(hg,k)\right)\left(W(g,h)+V(g,h)\right).
    \end{eqnarray*}

   Once again, continuity follows trivially. 
\end{proof}

\begin{lemma}\label{367}
    If $W,V:G\rimes G\to \mathbb{B}_X$ are mutually orthogonal partial cotranslations, then the unit space projector of $W$ is invariant for $W+V$. Moreover, the partial cotranslation obtained through $W+V$ and the projector of the units space of $W$ (following the construction on Proposition \ref{172}) is $W$.
\end{lemma}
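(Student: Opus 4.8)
The statement has two parts, and both collapse onto the same pair of algebraic identities, so the plan is to isolate these first. Writing $\P_W(g):=W(g,e)$ for the unit space projector of $W$, I would record two facts. Taking the defining identity $W(g,kh)=W(hg,k)W(g,h)$ once with $k=e$ and once with $h=e$ yields the idempotency-type relations
$$W(hg,e)W(g,h)=W(g,h)\qquad\text{and}\qquad W(g,h)W(g,e)=W(g,h).$$
The mutual orthogonality hypothesis $W(hg,k)V(g,h)=V(hg,k)W(g,h)=0$, specialized by setting $k=e$ in the first equation and $h=e$ (then renaming $k\to h$) in the second, gives
$$W(hg,e)V(g,h)=0\qquad\text{and}\qquad V(g,h)W(g,e)=0.$$

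For the first assertion I would verify the invariance identity of Definition \ref{158}, namely $\P_W(hg)(W+V)(g,h)=(W+V)(g,h)\P_W(g)$. Expanding the left side as $W(hg,e)W(g,h)+W(hg,e)V(g,h)$ and the right side as $W(g,h)W(g,e)+V(g,h)W(g,e)$, the four identities above collapse each side to $W(g,h)$, so the two coincide. Since $W+V$ is itself a partial cotranslation by Lemma \ref{372}, this shows precisely that $\P_W$ is an invariant projector associated to $W+V$.

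For the second assertion, the partial cotranslation produced by Proposition \ref{172} out of the partial cotranslation $W+V$ and the invariant projector $\P_W$ is, by construction, the map $(g,h)\mapsto (W+V)(g,h)\,\P_W(g)=W(g,h)W(g,e)+V(g,h)W(g,e)$. The two identities $W(g,h)W(g,e)=W(g,h)$ and $V(g,h)W(g,e)=0$ reduce this immediately to $W(g,h)$, which is exactly $W$, as claimed.

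The computations are routine; the only point demanding care is the correct specialization of the \emph{twisted} orthogonality relations, since the first argument of the left-hand factor is $hg$ rather than $g$. The substitutions $k=e$ and $h=e$ must be tracked so that one lands on the arguments $(hg,e)$ and $(g,e)$ that genuinely occur in the two expansions, rather than on some other pairing; this is where a careless reading of the orthogonality condition would go wrong. Once these specializations are pinned down, both parts follow with no further input.
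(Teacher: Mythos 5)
Your proof is correct and follows essentially the same route as the paper's: specialize the cotranslation identity and the orthogonality relations at the unit $e$, then observe that both sides of the invariance identity collapse to $W(g,h)$, and that $(W+V)(g,h)W(g,e)=W(g,h)$. The only cosmetic difference is that you reduce both sides to $W(g,h)$ explicitly, whereas the paper equates $W(hg,e)W(g,h)$ with $W(g,h)W(g,e)$ directly; the content is identical.
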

\begin{proof}
   It is enough to note that 
   \begin{align*}
       W(hg,e)\left(W(g,h)+V(g,h)\right)&=W(hg,e)W(g,h)+W(hg,e)V(g,h)\\
       &=W(g,h)W(g,e)\\
       &=W(g,h)W(g,e)+V(g,h)W(g,e)\\
       &= \left(W(g,h)+V(g,h)\right)W(g,e).
   \end{align*}

For the second statement, note that $$W(g,h)=W(g,h)W(g,e)=\left(W(g,h)+V(g,h)\right)W(g,e).$$
\end{proof}

\begin{lemma}
    If $\P$ and $\mathrm{Q}$ are mutually orthogonal invariant projectors for a partial cotranslation $V:G\rimes G\to \mathbb{B}_X$, then the partial cotranslations  $V_\P,V_\mathrm{Q}:G\rimes G\to \mathbb{B}_X$, given by $V_\P(g,h)=V(g,h)\P(g)$ and $V_\mathrm{Q}(g,h)=V(g,h)\mathrm{Q}(g)$ are mutually orthogonal.
\end{lemma}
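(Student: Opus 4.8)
The plan is to verify the two defining conditions of mutual orthogonality from Definition~\ref{158}(c), namely that both products $V_\P(hg,k)V_\mathrm{Q}(g,h)$ and $V_\mathrm{Q}(hg,k)V_\P(g,h)$ vanish for all $g,h,k\in G$. By symmetry of the roles of $\P$ and $\mathrm{Q}$ it suffices to handle one of these; the other follows by interchanging the two projectors. So first I would write out one such product explicitly using the definitions $V_\P(g,h)=V(g,h)\P(g)$ and $V_\mathrm{Q}(g,h)=V(g,h)\mathrm{Q}(g)$.

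The key idea is that the factor $\mathrm{Q}(g)$ sitting at the right end of $V_\mathrm{Q}(g,h)$ should be pushed leftward through the expression until it meets a $\P$, at which point the orthogonality relation $\P\mathrm{Q}=0$ annihilates everything. Concretely, I would compute
\begin{align*}
V_\P(hg,k)\,V_\mathrm{Q}(g,h)
&=V(hg,k)\P(hg)\,V(g,h)\mathrm{Q}(g)\\
&=V(hg,k)\,V(g,h)\P(g)\mathrm{Q}(g),
\end{align*}
where the second equality uses the invariance relation $\P(hg)V(g,h)=V(g,h)\P(g)$ from Definition~\ref{158}(b). Since $\P$ and $\mathrm{Q}$ are mutually orthogonal, $\P(g)\mathrm{Q}(g)=0$, so the whole product is zero. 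The symmetric product vanishes by the identical argument with $\P$ and $\mathrm{Q}$ swapped, using $\mathrm{Q}(g)\P(g)=0$.

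There is essentially no obstacle here: the proof is a two-line manipulation resting entirely on the invariance of each projector under $V$ together with their pairwise orthogonality. The only point requiring mild care is the bookkeeping of arguments — making sure the invariance relation is applied with the correct group elements, i.e.\ that $\P(hg)V(g,h)$ collapses to $V(g,h)\P(g)$ with the shift $g\mapsto hg$ handled correctly. I would also remark, if continuity were at issue, that $V_\P$ and $V_\mathrm{Q}$ are continuous whenever $V,\P,\mathrm{Q}$ are, by Proposition~\ref{172}, but the statement as given concerns only mutual orthogonality, so that observation is incidental.
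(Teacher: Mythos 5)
Your proof is correct and is essentially identical to the paper's: both push $\P(hg)$ through $V(g,h)$ via the invariance relation to land on $\P(g)\mathrm{Q}(g)=0$, handling the symmetric product the same way. The paper additionally cites Proposition~\ref{172} to justify that $V_\P$ and $V_\mathrm{Q}$ are themselves partial cotranslations, which is worth stating explicitly since the lemma calls them that.
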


\begin{proof} We know both $V_\P$ and $V_\mathrm{Q}$ are partial cotranslations from Proposition \ref{172}. Moreover
    $$V_P(hg,k) V_\mathrm{Q}(g,h)=V(hg,k)\P(hg)V(g,h)\mathrm{Q}(g)=V(hg,k)V(g,h)\P(g)\mathrm{Q}(g)=0,$$
    and the other composition follows similarly.
\end{proof}

\begin{definition}
    Given two partial cotranslations $W,V:G\times G\to \mathbb{B}_X$, we say they are \textbf{conjugated} if there is a map $T:G\to \mathbb{A}_X$ such that
$$T(hg)V(g,h)=W(g,h)T(g),\qquad\forall\,g,h\in G.$$
   
    In that case we call $T$ a \textbf{conjugation} between $W$ and $V$. If $T$, $W$ and $V$ are continuous, we say they are \textbf{continuously conjugated}. On the other hand, if 
    \begin{equation*}
        \sup_{g\in G}\left\{\norm{T(g)}, \norm{{T(g)^{-1}}}\right\}<\infty,
    \end{equation*}
   then we say $W$ and $V$ are \textbf{boudedly conjugated}
\end{definition}

The interest for conjugate two partial cotranslations is to obtain information of one of them from the other. Moreover, through the end of this section we use this to show that every partial cotranslation is indeed a cotranslation multiplied by some invariant projector.

\smallskip
The interest of finding continuous conjugations is that they would preserve topological properties, while bounded conjugations would preserve asymptotic behavior. 

\begin{lemma}\label{369}
    Let $W:G\times G\to \mathbb{B}_X$ be a partial cotranslation and let $T:G\to \mathbb{A}_X$ be an un arbitrary map. Defining $W_T:G\rimes G\to \mathbb{B}_X$ by
    $$W_T(g,h)=T(hg)^{-1}W(g,h)T(g),$$
    we obtain that $W_T$ is a partial cotranslation. Moreover, if $X$ is finite dimensional, then $\rank W_T=\rank W$. Finally, if $T$ and $W$ are continuous, then $W_T$ is continuous as well.
\end{lemma}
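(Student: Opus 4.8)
The plan is to read the assignment $W\mapsto W_T$ as conjugation by the family $T$ and to verify the three assertions in turn; the first (and conceptually central) one follows from a direct computation in which the $T$-factors telescope, the rank statement from invertibility of each $T(g)$, and continuity from continuity of the group operations. Throughout I would use only the defining identity of a partial cotranslation, $W(g,kh)=W(hg,k)W(g,h)$, together with the fact that $T(g)\in\mathbb{A}_X$ is invertible for every $g$.

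For the algebraic part I would simply expand the candidate product. Writing out
\begin{align*}
W_T(hg,k)\,W_T(g,h)&=\Big(T(khg)^{-1}W(hg,k)T(hg)\Big)\Big(T(hg)^{-1}W(g,h)T(g)\Big),
\end{align*}
the crucial observation is that the inner pair $T(hg)T(hg)^{-1}$ collapses to $\Id$, leaving $T(khg)^{-1}W(hg,k)W(g,h)T(g)$. Applying the partial cotranslation identity for $W$ rewrites $W(hg,k)W(g,h)$ as $W(g,kh)$, and since $khg=(kh)g$ the surviving outer factor $T(khg)^{-1}$ is exactly $T((kh)g)^{-1}$; hence the whole expression equals $T((kh)g)^{-1}W(g,kh)T(g)=W_T(g,kh)$. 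This is the required identity, so $W_T$ is a partial cotranslation.

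For the rank statement I would observe that, since $T(g)$ and $T(hg)^{-1}$ both lie in $\mathbb{A}_X$, left and right multiplication by them are linear isomorphisms of $\mathbb{B}_X$ and so preserve the rank of any operator; thus $\rank W_T(g,h)=\rank W(g,h)$ for every $g,h\in G$. Because $W_T$ is itself a partial cotranslation, Proposition \ref{366} guarantees that this rank is independent of the chosen pair, so $\rank W_T=\rank W$ in the sense of Definition \ref{365}. For continuity, I would note that $W_T$ is a composition of continuous maps: the group multiplication $(g,h)\mapsto hg$, the map $T$, inversion on $\mathbb{A}_X$, and the (bilinear, hence continuous) multiplication of $\mathbb{B}_X$; consequently $(g,h)\mapsto T(hg)^{-1}W(g,h)T(g)$ is continuous whenever $T$ and $W$ are.

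The computation is essentially routine, so there is no genuine obstacle; the only points demanding care are the bookkeeping of the group indices in the left-translations groupoid—one must confirm that the subscripts of the surviving $T$-factors are precisely $(kh)g$ and $g$ and not some reordering thereof, which is exactly what makes the middle cancellation land correctly—and the appeal to continuity of inversion on $\mathbb{A}_X$, which holds in the Banach-algebra setting (and for $GL_d(\mathbb{K})$) adopted in this section.
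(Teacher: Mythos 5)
Your proof is correct and follows essentially the same route as the paper's: the paper expands $W_T(g,kh)$ and inserts $T(hg)T(hg)^{-1}$ to reach $W_T(hg,k)W_T(g,h)$, which is your telescoping computation read in reverse, and it likewise dismisses the rank and continuity claims as immediate from invertibility of each $T(g)$ and continuity of the ingredients. No gaps.
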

\begin{proof}
    Note that
    \begin{align*}
        W_T(g,kh)&=T(khg)^{-1}W(g,kh)T(g)\\
        &=T(khg)^{-1}W(hg,k)W(g,h)T(g)\\
        &=T(khg)^{-1}W(hg,k)T(hg)T(hg)^{-1}W(g,h)T(g)\\
        &=W_T(hg,k)W_T(g,h).
    \end{align*}

    The second statement is trivial, since every $T(g)$ is invertible. The third statement is trivial.
\end{proof}

\begin{lemma}\label{371}
    If $W,V:G\times G\to \mathbb{B}_X$ are two mutually orthogonal partial cotranslations and $T:G\to \mathbb{A}_X$ is an arbitrary map, then $W_T$ and $V_T$ (as defined on Lemma \ref{369}) are mutually orthogonal. 
\end{lemma}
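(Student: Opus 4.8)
The plan is to verify the two defining equalities of mutual orthogonality for $W_T$ and $V_T$ by directly substituting the formula from Lemma \ref{369} and exploiting the fact that the conjugating factors telescope. Concretely, I would fix $g,h,k\in G$ and treat each of the products $W_T(hg,k)V_T(g,h)$ and $V_T(hg,k)W_T(g,h)$ separately, since these are precisely the two expressions that must vanish.

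For the first product, unfolding the definition $W_T(a,b)=T(ba)^{-1}W(a,b)T(a)$ gives $W_T(hg,k)=T(khg)^{-1}W(hg,k)T(hg)$ and $V_T(g,h)=T(hg)^{-1}V(g,h)T(g)$. The key observation is that the rightmost factor $T(hg)$ of the first expression meets the leftmost factor $T(hg)^{-1}$ of the second, so they cancel:
\begin{align*}
W_T(hg,k)V_T(g,h)&=T(khg)^{-1}W(hg,k)\,T(hg)T(hg)^{-1}\,V(g,h)T(g)\\
&=T(khg)^{-1}\big[W(hg,k)V(g,h)\big]T(g).
\end{align*}
Since $W$ and $V$ are mutually orthogonal, the bracketed middle factor $W(hg,k)V(g,h)$ is $0$, so the whole product vanishes. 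The second identity $V_T(hg,k)W_T(g,h)=0$ then follows by the identical computation with the roles of $W$ and $V$ interchanged, using $V(hg,k)W(g,h)=0$.

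The only thing that genuinely needs checking — and really the entire content of the argument — is that the internal $T$-arguments align so that the cancellation $T(hg)T(hg)^{-1}=\Id$ is legitimate; this is guaranteed because the composite source $hg$ simultaneously plays the role of the ``target index'' of $W_T(hg,k)$ and the ``source index'' of $V_T(g,h)$, which is exactly the cancellation pattern already exploited in the proof of Lemma \ref{369}. I do not anticipate any real obstacle: invertibility of $T$ is used only to make sense of $T(hg)^{-1}$, finite-dimensionality of $X$ is not required, and no continuity hypothesis enters the computation.
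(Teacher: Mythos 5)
Your proof is correct and follows exactly the same route as the paper: unfold the definition of $W_T$ and $V_T$, cancel the middle factors $T(hg)T(hg)^{-1}$, and invoke $W(hg,k)V(g,h)=0$. (Incidentally, your rightmost factor $T(g)$ is the correct one according to the definition in Lemma \ref{369}; the paper's displayed computation writes $T(g)^{-1}$ there, which is a harmless typo.)
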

\begin{proof}
It is enough to see that
\begin{align*}
    W_T(hg,k)V_T(g,h)&=T(khg)^{-1}W(hg,k)T(hg)T(hg)^{-1}V(g,h)T(g)^{-1}\\
    &=T(khg)^{-1}W(hg,k)V(g,h)T(g)^{-1}\\
    &=T(khg)^{-1} \circ 0\circ T(g)^{-1}\\
    &=0,
\end{align*}
and the other composition follows similarly.
\end{proof}

From now on we fix $X=\mathbb{K}^d$ with the Euclidean norm. Thus, we replace $\mathbb{B}_X$ with $\mathcal{M}_d(\mathbb{K})$, the space of square $d\times d$ matrices with entries on the field $\mathbb{K}$, and $\mathbb{A}_X$ with $GL_d(\mathbb{K})$ the general linear group.

\begin{proposition}\label{370}
 Every partial cotranslation $W:G\times G\to \mathcal{M}_d(\mathbb{K})$ is conjugated to a partial cotranslation $\widehat{W}:G\times G\to \mathcal{M}_d(\mathbb{K})$ whose projector of the units space is constant and orthogonal, i.e.
   $$\widehat{W}(g,e)=\begin{pmatrix}
\Id_{\rank W}&0\\
0&0
\end{pmatrix},\qquad\forall\,g\in G.$$
where $\Id_{\rank W}$ is the identity on $\mathbb{K}^{\rank W}$.
\end{proposition}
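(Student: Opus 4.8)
The plan is to exhibit the conjugation pointwise and then invoke Lemma \ref{369}, which already guarantees that conjugating $W$ by any map $T\colon G\to \mathbb{A}_X$ produces a partial cotranslation. Write $r:=\rank W$ and let $\P(g):=W(g,e)$ be the projector of the units space of $W$. By Lemma \ref{331} each $\P(g)$ is idempotent, and combining Lemma \ref{352} with Proposition \ref{366} gives $\rank \P(g)=\rank W(g,e)=r$ for every $g\in G$; that is, each $\P(g)$ is a rank-$r$ idempotent of $\mathcal{M}_d(\mathbb{K})$.

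The key linear-algebra fact is that every rank-$r$ idempotent matrix is similar to $E_r:=\begin{pmatrix}\Id_{r}&0\\0&0\end{pmatrix}$. Indeed, idempotency yields the direct-sum decomposition $\mathbb{K}^d=\im \P(g)\oplus\ker\P(g)$ with $\dim\im\P(g)=r$ and $\dim\ker\P(g)=d-r$. Choosing, for each $g\in G$, a basis of $\mathbb{K}^d$ whose first $r$ vectors span $\im\P(g)$ and whose remaining $d-r$ vectors span $\ker\P(g)$, and letting $T(g)\in GL_d(\mathbb{K})$ be the corresponding change-of-basis matrix, one obtains $T(g)^{-1}\P(g)T(g)=E_r$. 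Since $E_r$ is self-adjoint, the resulting projector is orthogonal, exactly as the statement demands.

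With $T\colon G\to GL_d(\mathbb{K})$ so defined, set $\widehat{W}:=W_T$, i.e. $\widehat{W}(g,h)=T(hg)^{-1}W(g,h)T(g)$. By Lemma \ref{369}, $\widehat{W}$ is a partial cotranslation, and it is conjugated to $W$ with conjugation $T$ itself, since $T(hg)\widehat{W}(g,h)=W(g,h)T(g)$. Its projector of the units space is then
\[
\widehat{W}(g,e)=T(g)^{-1}W(g,e)T(g)=T(g)^{-1}\P(g)T(g)=E_r,
\]
which is constant in $g$ and orthogonal, completing the argument.

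I expect no genuine obstacle in this construction; the only point to watch is that the statement asks merely for a conjugation, not a continuous or bounded one, so the bases (and hence the values $T(g)$) may be selected independently at each $g\in G$ with no compatibility, measurability, or boundedness constraint. The argument would become delicate only if continuity of $T$ were demanded, in which case one would need to know that $g\mapsto\P(g)$ varies continuously enough to admit a continuous trivialization of the splitting $\im\P(g)\oplus\ker\P(g)$; but that is not required here.
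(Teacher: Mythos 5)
Your proposal is correct and follows essentially the same route as the paper: observe that $W(g,e)$ is an idempotent of constant rank $\rank W$ (Lemma \ref{331} and Proposition \ref{366}), choose pointwise a similarity $T(g)$ bringing it to the block form $\begin{pmatrix}\Id_{\rank W}&0\\0&0\end{pmatrix}$, and apply Lemma \ref{369} to $W_T$. The extra detail you supply on the image--kernel splitting and the remark that no continuity of $T$ is required are both consistent with the paper's treatment.
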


\begin{proof}
  For every $g\in G$ we know $W(g,e)$ is an idempotent (Lemma \ref{331}) of rank $\rank W$ (Proposition \ref{366}). Then, it follows that there exists some $T(g)\in GL_d(\mathbb{K})$ such that
    $$T(g)^{-1}W(g,e)T(g)=\begin{pmatrix}
\Id_{\rank W}&0\\
0&0
\end{pmatrix},$$
which defines a map $T:G\to GL_d(\mathbb{K})$. Defining $\widehat{W}:G\times G\to \mathbb{B}_X$ by $\widehat{W}=W_T$ as in Lemma \ref{369}, we obtain a partial cotranslation which is by construction conjugated to $W$ and verifies
$$\widehat{W}(g,e)=T(g)^{-1}W(g,e)T(g)=\begin{pmatrix}
\Id_{\rank W}&0\\
0&0
\end{pmatrix}.$$
\end{proof}

Note that the map $T$ we defined on the preceding proposition is not unique. We know eigenvectors (thus diagonalizations) of a continuous matrix function are not in general continuous or bounded. We dedicate the end of this section to deal with this fact.

\begin{proposition}\label{411}
Fix $\P:G\to \mathcal{M}_d(\mathbb{K})$ a projector of constant rank $n$ for which exists $M\geq 1$ such that 
    $$\sup_{g\in G}\left\{\norm{\P(g)},\norm{\Id-\P(g)}\right\}<M,$$
    then, there exists a map $T:G\to GL_d(\mathbb{K})$ such that
    \begin{equation}\label{410}
        T^{-1}(g)\P(g)T(g)=\begin{pmatrix}
\Id_n&0\\
0&0
\end{pmatrix},\qquad\forall\,g\in G,
    \end{equation}
  and
      $$\sup_{g\in G}\left\{\norm{T(g)},\norm{T(g)^{-1}}\right\}<\infty.$$
\end{proposition}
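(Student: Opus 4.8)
The plan is to construct $T$ pointwise, making no use of any topology or group structure on $G$; only the uniform bound on $\P$ will matter. Fix $g\in G$. Since $\P(g)$ is idempotent of rank $n$, the space splits as $\mathbb{K}^d=\im\P(g)\oplus\ker\P(g)$ with $\dim\im\P(g)=n$. First I would choose, say by Gram--Schmidt, an orthonormal basis $v_1,\dots,v_n$ of $\im\P(g)$ and an orthonormal basis $w_1,\dots,w_{d-n}$ of $\ker\P(g)$, and let $T(g)\in GL_d(\mathbb{K})$ be the matrix whose columns are $v_1,\dots,v_n,w_1,\dots,w_{d-n}$. Because $\P(g)$ acts as the identity on its image and as zero on its kernel, one has $\P(g)v_i=v_i$ and $\P(g)w_j=0$, which is precisely the statement that $T(g)^{-1}\P(g)T(g)$ is the block matrix in (\ref{410}). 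So the conjugation identity is automatic; the entire content of the proposition lies in the uniform boundedness of $T(g)$ and $T(g)^{-1}$.

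For the norm bounds I would write an arbitrary $x\in\mathbb{K}^d$ via $T(g)x=a+b$, where $a=\sum_{i\le n}x_iv_i\in\im\P(g)$ and $b=\sum_{j}x_{n+j}w_j\in\ker\P(g)$. Orthonormality within each block gives $\norm{a}^2+\norm{b}^2=\norm{x}^2$, so the triangle inequality yields $\norm{T(g)x}\le\norm{a}+\norm{b}\le\sqrt2\,\norm{x}$; hence $\norm{T(g)}\le\sqrt2$ uniformly. The bound on $T(g)^{-1}$ is the crux, and it is here that the hypothesis enters. The key observation is that $a$ and $b$ are recovered from $T(g)x$ by applying the projectors themselves: $a=\P(g)\big(T(g)x\big)$ and $b=\big(\Id-\P(g)\big)\big(T(g)x\big)$. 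Taking norms and using $\norm{\P(g)},\norm{\Id-\P(g)}<M$ gives $\norm{a},\norm{b}<M\,\norm{T(g)x}$, whence $\norm{x}^2=\norm{a}^2+\norm{b}^2<2M^2\norm{T(g)x}^2$. Thus $\norm{T(g)x}>\norm{x}/(M\sqrt2)$ for every $x$, so the smallest singular value of $T(g)$ is bounded below by $1/(M\sqrt2)$ and $\norm{T(g)^{-1}}\le M\sqrt2$, uniformly in $g$.

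The step I expect to be the main obstacle --- in fact the only substantive point --- is exactly this last uniform lower bound. Pointwise, \emph{any} matrix whose columns are bases of $\im\P(g)$ and $\ker\P(g)$ conjugates $\P(g)$ to the standard projector; the difficulty, as noted just before the statement, is that such a diagonalizing matrix can become arbitrarily ill-conditioned, since its condition number reflects the angle between $\im\P(g)$ and $\ker\P(g)$, and that angle may degenerate as $g$ varies. The hypothesis $\norm{\P(g)},\norm{\Id-\P(g)}<M$ is precisely a uniform lower bound on this angle (indeed $\norm{\P(g)}=1/\sin\theta_g$, with $\theta_g$ the minimal angle between image and kernel), and the recovery identities $a=\P(g)T(g)x$, $b=(\Id-\P(g))T(g)x$ convert that analytic bound directly into control on $\norm{T(g)^{-1}}$ without ever computing an angle. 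I would also stress that choosing \emph{orthonormal} bases within each block, rather than arbitrary ones, is what makes the identity $\norm{a}^2+\norm{b}^2=\norm{x}^2$ available, and hence is essential to both estimates.
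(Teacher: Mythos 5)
Your proof is correct and follows essentially the same route as the paper: the same $T(g)$ built from orthonormal bases of $\im\P(g)$ and $\ker\P(g)$, with the inverse bound obtained by recovering the two components of $T(g)x$ via $\P(g)$ and $\Id-\P(g)$. The only difference is cosmetic: by using the Pythagorean identity across the two blocks instead of bounding coefficients one at a time, you get the dimension-free constants $\sqrt{2}$ and $M\sqrt{2}$ where the paper settles for $d$ and $dM$.
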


\begin{proof}
    Set $\xi_1,\dots,\xi_d$ the canonical basis of $\mathbb{K}^d$. For each $g\in G$, there is an orthonormal basis of $\im \,\P(g)$ given by $\{\eta_1^g,\dots,\eta_n^g\}$ and an orthonormal basis of $\ker\P(g)$ given by $\{\eta_{n+1}^g,\dots,\eta_d^g\}$. If we define a linear transformation $T(g):\mathbb{K}^d\to \mathbb{K}^d$ by
    $$T(g)\xi_i=\eta_i^g,$$
    evidently it verifies (\ref{410}).

    \smallskip
    
    Let $\zeta\in\mathbb{K}^d$ with $\norm{\zeta}=1$. There exists unique $\alpha_1^\zeta,\dots,\alpha_d^\zeta\in \mathbb{K}$ with $|\alpha_i^\zeta|\leq 1$ such that
    $$\zeta=\sum_{i=1}^d\alpha_i^\zeta\xi_i,$$
then
\begin{align*}
    \norm{T(g)\zeta}=\norm{T(g)\left(\sum_{i=1}^d\alpha_i^\zeta\xi_i\right)}
    \leq\sum_{i=1}^d|\alpha_i^\zeta|\norm{T(g)\xi_i}
    \leq\sum_{i=1}^d\norm{\eta_i^g}=d,
\end{align*}
hence $\norm{T(g)}\leq d$.

\smallskip
On the other hand, there exists unique $\beta_1^{\zeta,g},\dots,\beta_d^{\zeta,g}\in \mathbb{K}$ such that
$$\zeta=\sum_{i=1}^d\beta_i^{\zeta,g}\eta_i^g,$$
hence
$$\P(g)\zeta=\sum_{i=1}^n\beta_i^{\zeta,g}\eta_i^g,\qquad [\Id-\P(g)]\zeta=\sum_{i=n+1}^d\beta_i^{\zeta,g}\eta_i^g.$$

As $\norm{\P(g)}\leq M$, then $\norm{\P(g)\zeta}\leq M$, and as $\{\eta_1^g,\dots,\eta_n^g\}$ is an orthonormal basis, then $|\beta_i^{\zeta,g}|\leq M$ for every $i=1,\dots,n$. Analogously, as $\norm{\Id-\P(g)}\leq M$ and $\{\eta_{n+1}^g,\dots,\eta_d^g\}$ is an orthonormal basis, we obtain  $|\beta_i^{\zeta,g}|\leq M$ for every $i=n+1,\dots,d$. Thus
\begin{align*}
    \norm{T(g)^{-1}\zeta}=\norm{T(g)^{-1}\left(\sum_{i=1}^d\beta_i^{\zeta,g}\eta_i^g\right)} &\leq\sum_{i=1}^d|\beta_i^{\zeta,g}|\norm{T(g)^{-1}\eta_i^g}\\
    &\leq\sum_{i=1}^dM\cdot\norm{\xi_i}=dM,
\end{align*}
hence $\norm{T(g)^{-1}}\leq dM$.
\end{proof}

The next corollary follows trivially from Propositions \ref{370} and \ref{411}.

\begin{corollary}\label{415}
    If $W:G\times G\to \mathcal{M}_d(\mathbb{K})$ is a partial cotranslation whose projector of the units space and its compliment are uniformly bounded, i.e. there exist $M\geq 1$ such that 
    \begin{equation}\label{414}
        \sup_{g\in G}\left\{\norm{W(g,e)},\norm{\Id-W(g,e)}\right\}<M,
    \end{equation}
then $W$ is boundedly conjugated to a partial cotranslation whose units space projector is constant and orthogonal.
\end{corollary}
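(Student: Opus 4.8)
The plan is to reuse the argument of Proposition \ref{370} essentially verbatim, but to manufacture the conjugating family $T$ through Proposition \ref{411} instead of through an arbitrary diagonalization of the idempotents $W(g,e)$. The point is that the uniform bound (\ref{414}) is exactly the hypothesis that makes Proposition \ref{411} applicable, and it is precisely this bound that upgrades the resulting conjugation from an arbitrary one to a bounded one.

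First I would set $\P(g):=W(g,e)$ and check that this projector meets the hypotheses of Proposition \ref{411}. By Lemma \ref{331} each $W(g,e)$ is idempotent, hence a projector in the sense of Definition \ref{158}; by Proposition \ref{366} its rank equals $\rank W=:n$, so $\P$ has constant rank; and the bound (\ref{414}) is literally $\sup_{g\in G}\{\norm{\P(g)},\norm{\Id-\P(g)}\}<M$. Proposition \ref{411} then supplies a map $T:G\to GL_d(\mathbb{K})$ with $T(g)^{-1}\P(g)T(g)=\left(\begin{smallmatrix}\Id_n&0\\0&0\end{smallmatrix}\right)$ and, crucially, $\sup_{g\in G}\{\norm{T(g)},\norm{T(g)^{-1}}\}<\infty$.

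Next I would feed this $T$ into the construction of Lemma \ref{369}, defining $\widehat{W}:=W_T$ by $\widehat{W}(g,h)=T(hg)^{-1}W(g,h)T(g)$. Lemma \ref{369} guarantees that $\widehat{W}$ is again a partial cotranslation (of the same rank), and the defining relation $T(hg)\widehat{W}(g,h)=W(g,h)T(g)$ exhibits $T$ as a conjugation between $W$ and $\widehat{W}$. Evaluating at $h=e$ gives $\widehat{W}(g,e)=T(g)^{-1}W(g,e)T(g)=\left(\begin{smallmatrix}\Id_n&0\\0&0\end{smallmatrix}\right)$, a constant orthogonal projector, so $\widehat{W}$ has the required units space projector. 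Since $T$ and $T^{-1}$ are uniformly bounded, the conjugation is bounded, and the corollary follows.

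There is essentially no obstacle left to overcome: all of the genuine analytic content—namely that a uniformly bounded family of projectors admits a uniformly bounded simultaneous orthogonalization—has already been absorbed into Proposition \ref{411}, whose proof builds the bounded $T$ directly from orthonormal bases of $\im\,\P(g)$ and $\ker\,\P(g)$. The only step that deserves a moment's care is confirming that the rank of the units space projector is genuinely constant, so that the target block $\left(\begin{smallmatrix}\Id_n&0\\0&0\end{smallmatrix}\right)$ is the same for every $g$; this is exactly what Proposition \ref{366} provides, and everything else is the assembly of these two earlier results.
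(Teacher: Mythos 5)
Your proof is correct and is exactly the argument the paper intends: the corollary is stated to follow from Propositions \ref{370} and \ref{411}, and you have simply written out that combination, identifying $\P(g)=W(g,e)$ as a constant-rank projector satisfying the hypotheses of Proposition \ref{411} and feeding the resulting bounded $T$ into Lemma \ref{369}. Nothing is missing.
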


In this point we present a conjecture that so far we have not been able to proof nor to contradict.

\begin{conjecture}\label{412}
   Denote the canonical basis of $\mathbb{K}^d$ by $\{\xi_1,\dots,\xi_d\}$. If $\P:G\to \mathbb{B}_X$ is a continuous projector, then for every $g\in G$ there exists an orthonormal basis of $\im \,\P(g)$ given by $\{\eta_1^g,\dots,\eta_n^g\}$ and an orthonormal basis of $\ker\P(g)$ given by $\{\eta_{n+1}^g,\dots,\eta_d^g\}$ such that defining $T:G\to GL_d(\mathbb{K})$ by
    $$T(g)\xi_i=\eta_i^g,$$
    we obtain that $T$ is continuous.
\end{conjecture}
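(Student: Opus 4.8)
The plan is to translate the statement into the language of vector bundles. For a continuous projector $\P$ of constant rank $n$, the assignments $g\mapsto \im\,\P(g)$ and $g\mapsto \ker\,\P(g)$ define continuous subbundles $E$ and $E'$ of the trivial bundle $G\times\mathbb{K}^d$, complementary in the sense that $E\oplus E'=G\times\mathbb{K}^d$. Producing the map $T$ requested in the conjecture is exactly the same as producing a continuous orthonormal frame $\{\eta_1^g,\dots,\eta_n^g\}$ of $E$ together with a continuous orthonormal frame $\{\eta_{n+1}^g,\dots,\eta_d^g\}$ of $E'$: once both frames are in hand, $T(g)\xi_i=\eta_i^g$ is the desired continuous element of $GL_d(\mathbb{K})$, and conversely. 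Thus the conjecture asserts precisely that the image and kernel bundles of a continuous projector over $G$ admit continuous orthonormal trivializations.

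First I would establish the local version. Fix $g_0\in G$ and let $\{v_1,\dots,v_n\}$ be an orthonormal basis of $\im\,\P(g_0)$. Since $\P$ is continuous and $\P(g_0)v_i=v_i$, on a neighborhood $U$ of $g_0$ the vectors $\P(g)v_1,\dots,\P(g)v_n$ stay linearly independent and lie in $\im\,\P(g)$, because $\P(g)\big(\P(g)v_i\big)=\P(g)v_i$. Applying Gram--Schmidt, which is continuous wherever the inputs remain independent, yields a continuous orthonormal frame of $E$ over $U$; the same construction applied to $\Id-\P$ gives one of $E'$. Hence continuous diagonalizing maps $T_U:U\to GL_d(\mathbb{K})$ exist locally, and the whole difficulty is the passage from local to global.

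The remaining step would be to patch these local frames into a single global one. On an overlap the two local frames of $E$ differ by a continuous map into the orthogonal (resp. unitary) group, and whether these transition cocycles can be simultaneously trivialized is exactly the question of whether $E$ is trivial. When $G$ is contractible, or more generally whenever $E$ and $E'$ are themselves trivial, an obstruction-theoretic or partition-of-unity argument patches the local frames and the conjecture holds.

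The hard part will be precisely this global patching, and I expect it to be the reason the statement fails without an extra hypothesis on $G$: stable triviality of $E$ (which is automatic here, since $E\oplus E'$ is trivial) does not force $E$ to be trivial. Concretely, with $G=S^1$, $\mathbb{K}=\mathbb{R}$ and $d=2$, letting $\P(\theta)=\tfrac12\left(\begin{smallmatrix}1+\cos\theta & \sin\theta\\ \sin\theta & 1-\cos\theta\end{smallmatrix}\right)$ be the orthogonal projection onto the line spanned by $(\cos(\theta/2),\sin(\theta/2))$ gives a well-defined continuous rank-one projector whose image bundle is the M\"obius bundle; a nowhere-zero continuous section $g(\theta)(\cos(\theta/2),\sin(\theta/2))$ would force $g(0)=-g(2\pi)$, hence a zero of $g$, so no continuous $\eta_1^g$ exists. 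An analogous complex example lives over $G=S^1\times S^1$. My proposal is therefore to prove the conjecture under a triviality or contractibility hypothesis on $G$ via the patching argument above, while expecting that in full generality the M\"obius-type construction yields a genuine counterexample.
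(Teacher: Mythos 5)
The paper does not actually prove this statement: it is posed explicitly as an open conjecture (the surrounding text says it has been neither proved nor contradicted), so there is no proof of record to compare yours against. What you have written is, in its final paragraph, not a proof but a refutation, and the refutation is correct. The matrix $\P(\theta)=\tfrac12\bigl(\begin{smallmatrix}1+\cos\theta&\sin\theta\\ \sin\theta&1-\cos\theta\end{smallmatrix}\bigr)$ is the orthogonal projection onto the line spanned by $(\cos(\theta/2),\sin(\theta/2))$; it is $2\pi$-periodic, continuous on the locally compact Hausdorff group $G=S^1$, idempotent, and of constant rank $1$, and its image bundle is the M\"obius bundle. Since that bundle admits no nowhere-vanishing continuous section, no continuous choice $\theta\mapsto\eta_1^\theta$ of a unit vector in $\im\,\P(\theta)$ exists, hence no continuous $T$ with $T(\theta)\xi_1=\eta_1^\theta$ can be built. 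The conjecture imposes no hypothesis on $G$ beyond those fixed at the start of the section, so this settles it in the negative. Your structural analysis is also sound: the statement is exactly the assertion that the image and kernel bundles of $\P$ are trivial, the Gram--Schmidt argument shows local frames always exist (so the obstruction is purely global), and the conjecture does hold when $G$ is contractible (in particular $G=\mathbb{K}$ or $G=\mathbb{Z}$, the cases of main dynamical interest) or more generally when both bundles are trivial. Your remark that stable triviality of $\im\,\P$ does not imply triviality is precisely the right diagnosis.

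Two points to sharpen the conclusion. First, the counterexample does not immediately demolish Corollary \ref{413} and the second half of Theorem \ref{368}, because there the projector is not arbitrary: it is the units-space projector $\P(g)=W(g,e)$ of a continuous partial cotranslation, and the identities $W(e,h)=\P(h)W(e,h)=W(e,h)\P(e)$ together with Lemma \ref{352} and Proposition \ref{366} show that each $W(e,h)$ restricts to an isomorphism of $\im\,\P(e)$ onto $\im\,\P(h)$, depending continuously on $h$; so the image bundle of such a projector is always trivial, and only the kernel bundle remains in question. Second, your complex example does need a base such as $S^1\times S^1$, since every complex line bundle over $S^1$ is trivial; over the torus, the orthogonal projection onto a line bundle of nonzero Chern class works. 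I would recommend recording the $S^1$ example as a formal counterexample to the conjecture as stated, and then isolating triviality of the two bundles as the hypothesis under which the desired $T$ exists.
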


Once again a corollary follows trivially from Proposition \ref{370} and the known fact that inversion of linear transformations is a continuous map. The second statement in the following corollary follows trivially from Corollary \ref{415}.

\begin{corollary}\label{413}
     Suppose Conjecture \ref{412} is true. Then, every continuous partial cotranslation $W:G\times G\to \mathbb{B}_X$ is continuously conjugated to a partial cotranslation whose projector of the units space is constant and orthogonal.

     \smallskip
     Moreover, if the projector of the units space of $W$ and its compliment is uniformly bounded, {\it i.e.} it verifies (\ref{414}), then $W$ is continuously and boundedly conjugated to a partial cotranslation whose projector of the units space is constant and orthogonal.
\end{corollary}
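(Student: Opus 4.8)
The plan is to feed the units space projector of $W$ into Conjecture \ref{412} and then recognize the resulting map as exactly the conjugation already used to prove Proposition \ref{370}, the only new feature being its continuity.

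First I would observe that the units space projector $\P(g):=W(g,e)$ of a continuous partial cotranslation is itself a continuous projector, as shown in the earlier lemma on the projector of the units space, and that it has constant rank $n=\rank W$ by Proposition \ref{366}. Assuming Conjecture \ref{412}, I apply it to $\P$ to obtain a continuous map $T:G\to GL_d(\mathbb{K})$ with
$$T(g)^{-1}\P(g)T(g)=\begin{pmatrix}\Id_n&0\\0&0\end{pmatrix},\qquad\forall\,g\in G.$$
This $T$ is precisely the conjugation built in the proof of Proposition \ref{370}, so $\widehat{W}:=W_T$, in the notation of Lemma \ref{369}, is a partial cotranslation whose units space projector equals the constant orthogonal idempotent above. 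To see that the conjugation is continuous, note that inversion on $GL_d(\mathbb{K})$ is continuous, whence $g\mapsto T(g)^{-1}$ is continuous; since $W$ is continuous, the formula $W_T(g,h)=T(hg)^{-1}W(g,h)T(g)$ shows that $\widehat{W}$ is continuous and that $T$ is a continuous conjugation. This settles the first assertion.

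For the \emph{moreover}, suppose $W$ satisfies (\ref{414}). The point to note is that the map $T$ produced by Conjecture \ref{412} is assembled from orthonormal bases of $\im\,\P(g)$ and $\ker\,\P(g)$ in exactly the same way as the map constructed in Proposition \ref{411}. Consequently the norm estimates in the proof of Proposition \ref{411} apply verbatim to this $T$ and give $\sup_{g\in G}\{\norm{T(g)},\norm{T(g)^{-1}}\}<\infty$, which is precisely the boundedness packaged in Corollary \ref{415}. Thus the single map $T$ is at once continuous and bounded, so $W$ is continuously and boundedly conjugated to $\widehat{W}$. There is no genuine obstacle here beyond this identification; the one place demanding care is checking that the continuous $T$ of Conjecture \ref{412} and the bounded $T$ of Proposition \ref{411} arise from literally the same orthonormal-basis construction, after which the conclusion is immediate.
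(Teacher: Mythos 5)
Your proof is correct and follows essentially the same route as the paper, which simply cites Proposition \ref{370}, the continuity of matrix inversion, and Corollary \ref{415}. In fact your treatment of the \emph{moreover} part is slightly more careful than the paper's one-line reference: you correctly observe that the norm estimates in Proposition \ref{411} depend only on the orthonormality of the chosen bases and on the bound (\ref{414}), so they apply verbatim to the single continuous $T$ supplied by Conjecture \ref{412}, yielding one conjugation that is simultaneously continuous and bounded rather than two separate ones.
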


\begin{remark}
    {\rm
    Note that the previous corollary provides, if Conjecture \ref{412} is true, a non commutative, non invertible and non differentiable generalization of S. Siegund's result of reducibility for nonautonomous differential equations \cite[Theorem 3.2]{Siegmund2}, since we already showed (Example \ref{417}) that an evolution operator multiplied by an invariant proyector defines a partial cotranslation. The only difference is that in \cite{Siegmund2}, all invariant projectors come from a dichotomy. 

    \smallskip
   Note that if two partial cotranslations are boundedly and continuously conjugated, we are describing a generalized notion of {\it kinematic similarity} (see for instance \cite[Definition 2.1]{Siegmund2}), independently if the group involved is commutative or has a differential structure.
    Remarkably, for discrete groups, as continuity is trivial, the generalization holds.
}
\end{remark}

To conclude, we state a theorem and consequent corollary to summarize the results of this section.

\begin{theorem}\label{368}
   Every partial cotranslation is completable to a cotranslation, i.e. for every partial cotranslation  $W:G\rimes G\to \mathcal{M}_d(\mathbb{K})$ there exists a partial cotranslation $V:G\times G\to \mathcal{M}_d(\mathbb{K})$ such that $W$ and $V$ are mutually orthogonal and $W+V$ has rank $d$. 

   \smallskip
  Moreover, if Conjecture \ref{412} is true and $W$ is continuous, then $V$ can also be chosen continuous.
\end{theorem}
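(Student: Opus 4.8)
The plan is to reduce to the normalized situation furnished by Proposition \ref{370}, fill in the "missing" directions with the complementary constant projector, and then transport everything back along the conjugation. First I would apply Proposition \ref{370} to obtain a map $T:G\to GL_d(\mathbb{K})$ together with the conjugate partial cotranslation $\widehat{W}=W_T$ whose units space projector is the constant orthogonal idempotent
$$\pi:=\widehat{W}(g,e)=\begin{pmatrix}\Id_{\rank W}&0\\0&0\end{pmatrix}.$$
The two invariance relations enjoyed by every partial cotranslation, namely $\widehat{W}(g,h)\widehat{W}(g,e)=\widehat{W}(g,h)$ and $\widehat{W}(hg,e)\widehat{W}(g,h)=\widehat{W}(g,h)$ (read off from the defining law by putting $h=e$ and $k=e$ respectively), now become $\widehat{W}(g,h)\pi=\pi\widehat{W}(g,h)=\widehat{W}(g,h)$. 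Hence $\widehat{W}(g,h)$ has the block form $\begin{pmatrix}B(g,h)&0\\0&0\end{pmatrix}$, and since conjugation preserves rank (Lemma \ref{369}) the block $B(g,h)$ lies in $GL_{\rank W}(\mathbb{K})$.

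Next I would take as complementary normalized object the constant matrix $\widehat{V}(g,h):=\Id_d-\pi$. A constant idempotent is trivially a partial cotranslation, and the relations above give at once $\widehat{W}(hg,k)\widehat{V}(g,h)=\widehat{W}(hg,k)(\Id-\pi)=0$ and $\widehat{V}(hg,k)\widehat{W}(g,h)=(\Id-\pi)\widehat{W}(g,h)=0$, so $\widehat{W}$ and $\widehat{V}$ are mutually orthogonal. Their sum $\widehat{W}+\widehat{V}=\begin{pmatrix}B(g,h)&0\\0&\Id_{d-\rank W}\end{pmatrix}$ is a partial cotranslation by Lemma \ref{372} and, being block-diagonal with invertible diagonal blocks, has rank $d$ at every pair $(g,h)$.

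Finally I would transport back along $T^{-1}$. Setting $V:=(\widehat{V})_{T^{-1}}$, Lemma \ref{369} makes $V$ a partial cotranslation; since a direct computation gives $W=(\widehat{W})_{T^{-1}}$ as well, Lemma \ref{371} shows $W$ and $V$ are mutually orthogonal, while $W+V=(\widehat{W}+\widehat{V})_{T^{-1}}$ still has rank $d$ by the rank invariance of Lemma \ref{369}. A full-rank partial cotranslation takes values in $GL_d(\mathbb{K})=\mathbb{A}_X$, so $W+V$ is a genuine cotranslation, which establishes the first claim.

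For the moreover part I would simply replace Proposition \ref{370} by Corollary \ref{413}: under Conjecture \ref{412} a continuous $W$ admits a \emph{continuous} conjugation $T$ into the normalized form, whence $\widehat{V}$ is continuous (indeed constant), $T^{-1}$ is continuous because inversion is continuous on $GL_d(\mathbb{K})$, and Lemma \ref{369} delivers the continuity of $V$. The only genuine difficulty is precisely this continuity of the conjugating family $T$ — selecting orthonormal bases of $\im\P(g)$ and $\ker\P(g)$ that vary continuously in $g$ — which is exactly the content of Conjecture \ref{412} and the reason the continuous statement must remain conditional; everything else is formal bookkeeping with the conjugation lemmas.
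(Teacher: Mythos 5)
Your proposal is correct and follows essentially the same route as the paper: normalize $W$ to $\widehat{W}=W_T$ via Proposition \ref{370}, take the complementary constant projector $\widehat{V}=\Id_d-\pi$ as the orthogonal completion, verify orthogonality and rank $d$ in the normalized picture, and transport back with $T^{-1}$ using Lemmas \ref{369}, \ref{371} and \ref{372}, with the continuous case deferred to Corollary \ref{413}. The extra observations you add (the block form of $\widehat{W}$ and the invertibility of its upper-left block) are harmless refinements of the same argument.
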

\begin{proof}
Define the constant partial cotranslation $\widehat{V}:G\times G\to \mathcal{M}_d(\mathbb{K})$ by
$$\widehat{V}(g,h)=\begin{pmatrix}
0&0\\
0&\Id_{d-\rank W}
\end{pmatrix}.$$

We affirm $\widehat{V}$ is mutually orthogonal to $\widehat{W}$ (defined as in Proposition \ref{370}). Indeed:
\begin{align*}
\widehat{W}(hg,k)\widehat{V}(g,h)&=\widehat{W}(hg,k)\widehat{W}(hg,e)\widehat{V}(g,h)\\
&=\widehat{W}(hg,k)\begin{pmatrix}
\Id_{\rank W}&0\\
0&0
\end{pmatrix}\begin{pmatrix}
0&0\\
0&\Id_{d-\rank W}
\end{pmatrix}=0,
\end{align*}
and the other composition follows analogously. Choose $T$ as in Proposition \ref{370}. Denoting $T^{\inv}:G\to\mathbb{A}_X$ by $T^\inv(g)=T(g)^{-1}$, it follows by Lemma \ref{371} that $W=\widehat{W}_{T^\inv}$ and $V:=\widehat{V}_{T^{\inv}}$ are mutually orthogonal. Hence, by Lemma \ref{372}, $W+V$ is also a partial cotranslation. Moreover, for an arbitrary $g\in G$:
$$\rank \left(\widehat{W}+\widehat{V}\right)=\rank \left(\widehat{W}(g,e)+\widehat{V}(g,e)\right)=\rank\left(\begin{pmatrix}
\Id_{\rank W}&0\\
0&0
\end{pmatrix}+\begin{pmatrix}
0&0\\
0&\Id_{d-\rank W}
\end{pmatrix}\right)=d,$$
thus, by Lemma \ref{369} we have
$$\rank(W+V)=\rank \left(\widehat{W}+\widehat{V}\right)_{T^\inv}=\rank\left(\widehat{W}+\widehat{V}\right)=d.$$

The second statement follows trivially from Corollary \ref{413}.
\end{proof}

\begin{corollary}
    Every partial cotranslation is a cotranslation multiplied with an invariant projector.
\end{corollary}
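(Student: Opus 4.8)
The plan is to read the statement directly off the completion theorem (Theorem \ref{368}) together with the reconstruction identity in Lemma \ref{367}, so that essentially no new computation is required. First I would invoke Theorem \ref{368} on the given partial cotranslation $W:G\rimes G\to\mathcal{M}_d(\mathbb{K})$ to produce a mutually orthogonal partial cotranslation $V:G\rimes G\to\mathcal{M}_d(\mathbb{K})$ with $\rank(W+V)=d$. Set $Z:=W+V$, which by Lemma \ref{372} is again a partial cotranslation.

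The one point that deserves explicit justification is that $Z$ is an honest cotranslation, not merely a partial one. Since $\rank Z=d$ and $Z$ takes values in $\mathcal{M}_d(\mathbb{K})$, every matrix $Z(g,h)$ is invertible, so $Z$ maps into $GL_d(\mathbb{K})=\mathbb{A}_X$. The partial cotranslation identity $Z(g,kh)=Z(hg,k)Z(g,h)$ is then exactly the groupoid morphism condition for the left translations groupoid $G\rimes G$, whence $Z$ is a cotranslation.

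Finally I would apply Lemma \ref{367} to the mutually orthogonal pair $W,V$: it asserts that the units space projector $\P(g):=W(g,e)$ of $W$ is an invariant projector for $Z=W+V$, and that the construction of Proposition \ref{172} applied to $Z$ and $\P$ returns $W$, i.e. $W(g,h)=Z(g,h)\P(g)$ for all $g,h\in G$. This displays $W$ as the cotranslation $Z$ multiplied by the invariant projector $\P$, which is precisely the claim.

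I do not anticipate a genuine obstacle, since the substance already resides in Theorem \ref{368} and Lemma \ref{367} and the corollary merely repackages them; the only step needing a line of argument is the observation that a full-rank partial cotranslation is automatically a cotranslation, which rests on the invertibility of rank $d$ matrices on $\mathbb{K}^d$.
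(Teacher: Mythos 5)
Your proposal is correct and follows exactly the paper's route: the paper's proof is literally ``follows trivially from Theorem \ref{368} and Lemma \ref{367},'' and you have simply spelled out the (implicit) observation that a rank-$d$ partial cotranslation into $\mathcal{M}_d(\mathbb{K})$ lands in $GL_d(\mathbb{K})$ and is therefore a genuine cotranslation. Nothing further is needed.
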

\begin{proof}
The statement follows trivially from Theorem \ref{368} and Lemma \ref{367}.
\end{proof}

\end{document}